\newtheorem{thm}{Theorem}[section]
\newtheorem{lem}[thm]{Lemma}
\newtheorem{definition}[thm]{Definition}
\newtheorem{example}[thm]{Example}
\title{The odd chromatic number of a toroidal graph is at most 9}
\author{Fangyu Tian$^{1}$\hskip 0.2in  yuxue Yin$^{2}$}
\address{
$^{1}$\small Department of Mathematics, Central China Normal University, Wuhan, Hubei, China.\\
$^2$\small Department of EE, Tsinghua University, Beijing, China.
}
\email{yinyuxue945@mail.tsinghua.edu.cn}
\begin{document}
\maketitle
\begin{abstract}
It's well known that every planar graph is $4$-colorable.
A toroidal graph is a graph that can be embedded on a torus. 
It's proved that every toroidal graph is $7$-colorable.
A proper coloring of a graph is called \emph{odd} if every non-isolated vertex has at least one color that appears an odd number of times in its neighborhood. The smallest number of colors that admits an odd coloring of a graph $ G $ is denoted by $\chi_{o}(G)$.
In this paper, we prove that if $G$ is tortoidal, then $\chi_{o}\left({G}\right)\le9$; Note that $K_7$ is a toroidal graph, the upper bound is no less than $7$. 
\end{abstract}

\section{Introduction}

In this paper, all graphs are finite and simple, which means no parallel edges and no loops
at their vertices.  A proper $k$-coloring of a simple graph $G$ is an assignment (or a special case of labeling) of $k$ colors to the vertices of $G$ so that no two adjacent vertices share the same color. We say a graph $G$ is $c$-colorable if it admits a proper $c$-coloring. The chromatic number of a graph $G$ is the minimum $c$ such that $G$ is $c$-colorable, and this minimum color is denoted by $\chi(G)$. 
It's well known that $ \chi(G)\le4 $ if $G$ is planar. 
For a proper coloring, there exists at least one color that appears an odd number of times in the neighborhood of $v$, then we say $v$ admits an odd coloring. We use $c_o(v)$ to denote the color and $C_o(v)$ to denote the set of the odd colors. An odd $c$-coloring of a graph is a proper $c$-coloring with the additional constraint that each vertex admits an odd coloring. A graph G is odd $c$-colorable if it has an odd $c$-coloring. The odd chromatic number of a graph G, denoted by $\chi_o(G)$, is the minimum $c$ such that G has an odd $c$-coloring. Odd coloring has potential applications in many areas, for example, battery consumption aspects of sensor networks and in RFID protocols~\cite{smorodinsky2013conflict}.

Odd coloring was introduced very recently by Petru$\breve{s}$evski and $\breve{S}$krekovski\\~\cite{petruvsevski2021colorings}, who proved that planar graphs are odd $9$-colorable. Note that a 5-cycle is a planar graph whose odd chromatic number is exactly 5,  they further conjectured that planar graphs are odd $5$-colorable. Petr and Portier~\cite{petr2022odd} proved that planar graphs are odd $8$-colorable. Fabrici\cite{fabrici2022proper} proved a strengthening version about planar graphs regarding similar coloring parameters.  
%study the restriction of girth and proved that, $\chi_{o}(G)\leq5$ if $G$ is a planar graph with girth at least $7$, and $\chi_{o}(G)\leq6$ if $G$ is a planar graph with girth at least $6$.
 Cranston~\cite{cranston2022odd}
%%%%%%%%%%there seems some faults  here. Maybe not this paper. check it
studied the restriction of girth and proved that, $\chi_{o}(G)\leq5$ if $G$ is a planar graph with girth at least $7$, and $\chi_{o}(G)\leq6$ if $G$ is a planar graph with girth at least $6$.
Eun-Kyung Cho~\cite{cho2022odd} focused on a sparse graph and conjectured that, for $c\ge4$, if G is a graph with $mad(G)\le \frac{4c-4}{c+1}$, then $\chi_o(G)\le c$. And proved that, if G is a graph with $mad(G)\le mad(K^*_{c+1})= \frac{4c}{c+2}$, then $\chi_o(G)\le c$ for $c\ge 7$, unless G contains $K^*_{c_1}$ as a subgraph. They further proved that $\chi_{o}(G)\leq5$ if $G$ is a planar graph with girth at least $6$.

Suppose that $G$ is a toroidal graph. The $7$-color theorem~\cite{kauffman2009seven} shows that $\chi(G)\le7$. Notice that $K_7$ is a toroidal graph, $\chi(G)=7$. Note that $\chi_o(G)\ge\chi(G)$, if $G$ is a toroidal graph, then $\chi_o(G)\ge7$.
%\section{Preface}
% \begin{conj}
% If $G$ is a toroidal graph, then $\chi_o(G)\le7$
% \end{conj}
We proved that,
\begin{thm}\label{th1}
If $G$ is a toroidal graph, then $\chi_o(G)\le9$
\end{thm}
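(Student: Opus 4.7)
The plan is a minimum-counterexample argument combined with a discharging analysis on the torus. Suppose for contradiction that $G$ is a toroidal graph with $\chi_o(G) \geq 10$ that minimizes $|V(G)| + |E(G)|$; fix a torus embedding. By minimality, every proper subgraph (itself toroidal) has an odd $9$-coloring, so any ``local'' configuration that would let us extend such a coloring to $G$ is forbidden to appear in $G$. The strategy is to compile a long enough list of such forbidden (reducible) configurations that Euler's formula on the torus, $\sum_{v} (d(v)-4) + \sum_{f}(d(f)-4) = 0$, is violated.

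The first phase of the proof is to establish reducibility lemmas. The natural candidates concern low-degree vertices and their local neighborhoods: for a vertex $v$ of small degree $d$, deleting $v$ and extending a given odd $9$-coloring forbids at most $d$ colors from proper coloring, plus additional constraints to preserve the odd condition at each neighbor of $v$ (which has at most $d(u)$ neighbors contributing). A careful count should yield that, e.g., $\delta(G)\geq 5$, together with finer configurations such as a degree-$5$ vertex not being incident to too many triangles, a degree-$5$ vertex not being adjacent to another vertex of very small degree, and restrictions on which faces of length $3$ may share vertices with low-degree vertices. I would extract these by considering an odd $9$-coloring of $G-v$ (or $G$ with a suitable edge contracted) and counting the forbidden/required colors at $v$ and at each neighbor, using that $|C_o(u)|$ is controlled by $d(u)$.

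The second phase is discharging. Assign initial charge $\mu(x)=d(x)-4$ to each vertex and face, so that the total charge is $0$. Design rules that move charge from vertices and faces of degree $\geq 5$ (the only carriers of positive charge) to the problematic objects of degree $3$: typically, a $k$-face sends some fixed share to each incident vertex of degree $5$, and high-degree vertices send charge to their incident triangles. The objective is to show that, given the reducible configurations forbidden in $G$, every vertex and every face ends with nonnegative final charge, with at least one strict inequality, contradicting that the total is $0$.

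The main obstacle is twofold. First, on the torus the total Euler charge is exactly $0$ (not the negative constant enjoyed by the sphere), so the discharging scheme has no global slack: every negatively charged vertex or face must be paid for by a reducible configuration that is genuinely forbidden \emph{locally}. Second, the odd condition is inherently non-local, since recoloring a vertex $v$ can destroy the odd condition at a neighbor $u$ by changing the parity of some color in $N(u)$; therefore the reducibility proofs cannot merely count forbidden colors at $v$ but must argue about color multiplicities in second neighborhoods. Managing these second-order effects, while keeping the list of reducible configurations small and uniform enough to discharge cleanly against the Euler bound of $0$, is where the bulk of the technical work will lie.
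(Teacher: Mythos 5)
Your overall architecture (minimal counterexample, reducibility lemmas, discharging against Euler's formula on the torus) is the same as the paper's, but as written the plan has a genuine gap precisely at the point you yourself flag: the torus has zero global slack, and your list of candidate reducible configurations consists only of low-degree configurations (a $4^-$-vertex, a $5$-vertex in too many triangles, adjacent low-degree vertices, etc.). Those cannot suffice. With your charge $\mu(x)=d(x)-4$ (or the paper's $d(v)-6$, $2d(f)-6$), a $6$-regular triangulation of the torus has every vertex and every face at exactly zero charge, so after any discharging scheme that only penalizes $5^-$-vertices and $4^+$-faces the residual case is a $6$-regular triangulation, and no contradiction appears. The paper's entire technical core is a reducibility argument for exactly this residual case: a ``special'' $6$-vertex all of whose neighbors and second neighbors are special $6$-vertices (Figure 1), proved reducible via a delicate analysis of the unique odd colors $\overline{c_o}(u_i)$ of the six neighbors (Lemmas \ref{32123}, \ref{42123}, \ref{1234}, the Claim, and a multi-case analysis in Lemma \ref{6-v}). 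Your proposal contains no counterpart to this, and your generic recipe (delete $v$, count forbidden colors at $v$ and its neighbors) cannot produce one: a degree-$6$ vertex has six neighbors each potentially forbidding two colors (its own color plus its unique odd color), i.e.\ up to $12$ constraints against only $9$ colors, so a naive extension argument fails and one must instead exploit the triangulation structure to recolor neighbors and create extra odd colors, which is what the paper's tool lemmas do.

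A secondary, smaller issue: your minimality measure ($|V|+|E|$, ``every proper subgraph has an odd $9$-coloring'') does not match the kind of reductions that are actually needed. Deleting a vertex changes parities at its neighbors, so the standard fix is to pass to an auxiliary graph obtained by adding gadget paths through new $2$-vertices (as the paper does in Lemmas \ref{minimum degree}, \ref{oddvet-non-adja}, \ref{5-vetx}, \ref{6-v}); such auxiliary graphs are not subgraphs and may have more vertices and edges, so your induction hypothesis does not apply to them. The paper circumvents this with a nonstandard minimality (fewest $4^+$-vertices, then fewest $5^+$-neighbors of $5^+$-vertices, then fewest edges). You would need either a similarly tailored measure or reductions that genuinely stay inside subgraphs, and the latter is hard to arrange while preserving the odd condition at the neighbors of the deleted vertex.
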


We prove Theorem~\ref{th1} by reduction. In the construction of the minimal counterexample, we organize the constraints in a creative way, which simplifies our proof greatly and can be modified to settle other coloring problems. Moreover, we pay a lot attention to summarizing the complex situations in the proof and present it in brevity. More precisely, we force the most difficult part into the configuration as is shown in Figure 1 via discharging method, and simplify the analysis of this main reducible configuration by splitting it into Lemmas~\ref{32123},~\ref{42123} and~\ref{1234}, and then come to a conclusion in Lemma~\ref{6-v} based on the former tool Lemmas and Claim.

\section{Proof}
Let $G$ be a counterexample to Theorem~\ref{th1} with the minimum number of $4^+$-vertices, and subject to that, the number of $5^+$-neighbors of $5^+$-vertex $G$ is minimized, and subject to these conditions $|E(G)|$ is minimized.

\begin{lem}\label{minimum degree}
The minimum degree $\delta(G)\ge5$
\end{lem}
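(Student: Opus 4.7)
The plan is a standard reducibility argument. Suppose for contradiction some vertex $v\in V(G)$ has $d(v)=k\le 4$ and set $G':=G-v$. Deleting $v$ weakly decreases the number of $4^+$-vertices (strictly when $k=4$, since $v$ itself leaves the count) and weakly decreases the number of $5^+$-neighbors of a $5^+$-vertex, and strictly decreases $|E(G)|$ as long as $v$ has a neighbor (an isolated $v$ could be stripped off harmlessly). Hence $G'$ is strictly smaller than $G$ in the chosen lexicographic order, and by the minimality of $G$ there exists an odd $9$-coloring $\phi$ of $G'$.

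I then extend $\phi$ to $G$ by choosing a color $c(v)$ for $v$. Two families of constraints bind this choice. \emph{Propriety} forbids $c(v)\in\phi(N(v))$, contributing at most $k$ forbidden colors. \emph{Odd-preservation at neighbors} forbids $c(v)=c_u$ whenever $c_u$ is the unique odd color of some $u\in N(v)$ under $\phi$: otherwise raising the multiplicity of $c_u$ in $N_G(u)$ by one makes it even, and $u$ loses its only odd color. This contributes at most $k$ further forbidden colors. Since $2k\le 8<9$, an admissible color for $v$ always remains.

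Finally, $v$ itself must see some odd color, i.e.\ $\phi(N(v))$ must contain a color of odd multiplicity. The multiplicities sum to $k$, so this is automatic when $k\in\{1,3\}$. The delicate configurations are $k=2$ with both neighbors identically colored, and $k=4$ with all four neighbors sharing one color or split into two pairs of equal colors. In each such ``balanced'' case I would recolor a single neighbor $u\in N(v)$ to break the parity: since $u$'s own odd condition depends only on $\phi(N(u))$ and is unaffected by a change to $\phi(u)$, the swap only needs to respect propriety at $u$ (avoid the at most $d(u)-1$ colors on $N(u)\setminus\{v\}$) and preservation of the odd condition at each $w\in N(u)\setminus\{v\}$.

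The main obstacle is precisely this recoloring step: the number of constraints it imposes scales with $d(u)$, which is not bounded a priori. I expect to leverage the secondary minimality condition --- the fewest $5^+$-neighbors of a $5^+$-vertex --- to control the local structure around $u$, and to combine this with a careful case analysis on how many neighbors of $u$ actually possess a unique odd color, letting the generous $9$-color palette absorb the remaining slack. Balancing these simultaneous odd conditions at $v$, at $u$, and at each $w\in N(u)\setminus\{v\}$ under a single color swap is the step where I expect most of the proof effort to concentrate.
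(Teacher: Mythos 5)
Your outer framework (minimal counterexample, pass to a smaller graph, extend an odd $9$-coloring to $v$ while avoiding at most $k$ neighbor colors and at most $k$ unique odd colors, $2k\le 8<9$) matches the paper's, but the proof is incomplete exactly at the step you flag yourself: forcing $v$ to see a color of odd multiplicity. In the balanced cases ($k=2$ with both neighbors alike, $k=4$ with the neighborhood colored $2{+}2$ or $4$) your plan is to recolor one neighbor $u$ after the fact. That step does not go through with the resources you cite: a recoloring of $u$ must avoid every color on $N(u)\setminus\{v\}$ and must not kill the odd condition at any $w\in N(u)\setminus\{v\}$ whose only odd color is affected by the change, and the number of these constraints grows with $d(u)$, which is unbounded, so the $9$-color palette cannot ``absorb the slack''; moreover the secondary minimality condition (fewest $5^+$-neighbors of $5^+$-vertices) gives no control over $d(u)$ or over these constraints. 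Since making $v$ admit an odd color is the entire content of the lemma, this is a genuine gap, not a routine detail.

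The paper closes this gap by modifying the auxiliary graph instead of recoloring afterwards: it takes $G-v$ and adds three paths $v_1x_1v_2$, $v_2x_2v_3$, $v_3x_3v_1$ through new $2$-vertices $x_i$. In any odd $9$-coloring of this $G'$, each $2$-vertex $x_i$ forces its two neighbors to receive distinct colors (otherwise $x_i$ has no odd color), so $c(v_1),c(v_2),c(v_3)$ are pairwise distinct; then at most one of them coincides with $c(v_4)$, some color appears exactly once in $N_G(v)$, and $v$ automatically has an odd color. After that the extension is exactly your counting step: color $v$ avoiding the at most eight colors $c(v_i),c_o(v_i)$. If you want to repair your write-up, this gadget --- constraining the neighbors' colors \emph{before} the coloring of $G'$ is produced, rather than recoloring a neighbor of unbounded degree afterwards --- is the missing idea; you would also need to check that the modified $G'$ precedes $G$ in the chosen lexicographic order, which is where the first coordinate (number of $4^+$-vertices) is used.
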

\begin{proof}
Suppose otherwise that there is a $4$-vertex $v$ in $G$. Let $v_1,v_2,v_3,v_4$ be the neighbors of $v$. Let $G'$ be the graph obtained from $G-v$ by adding  $v_1x_1v_2,v_2x_2v_3,v_3x_3v_1$ , where each of $x_i$, $i\in [3]$ is a new $2$-vertex.
By the minimality of $G$, $G'$ has an odd $9$-coloring $c'$.
Then we can get an odd $9$-coloring $c$ of $G$ by coloring each vertex other than $v$ in $G$ with the same color in $G'$ and coloring $v$ with $[9]\setminus \{c(v_1),c(v_2),c(v_3),c(v_4),c_o(v_1),c_o(v_2),c_o(v_3),c_o(v_4)\}$. Since  each of $x_i$ is a $2$-vertex, $c(v_1)\neq(v_2)\neq(v_3)$. Then $v$ has an odd coloring, a contradiction.
\end{proof}

\begin{lem}\label{oddvet-non-adja}

%$5$-v has at most one odd neighbor

The odd vertex is not adjacent to any odd vertex.

\end{lem}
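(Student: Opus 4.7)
The plan is to derive a contradiction by \emph{subdividing} the edge joining two adjacent odd-degree vertices and invoking the minimality of $G$. Suppose for contradiction that $u$ and $v$ are adjacent odd-degree vertices; by Lemma~\ref{minimum degree}, both have degree at least $5$, so in particular both are $5^+$-vertices. I would form $G^*$ from $G$ by deleting the edge $uv$ and inserting a new $2$-vertex $x$ adjacent to $u$ and $v$. Since subdivision preserves torus embeddability, $G^*$ is toroidal.

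Next I would argue that $G^*$ is strictly smaller than $G$ in the lexicographic minimality order, so by minimality $G^*$ admits an odd $9$-coloring $c^*$. The degrees of $u$ and $v$ are unchanged (each loses one edge and gains one), and $x$ has degree $2$, so the number of $4^+$-vertices is the same in $G^*$ as in $G$. However, in $G$ the edge $uv$ contributes one $5^+$-neighbor to each of $u$ and $v$, whereas in $G^*$ the replacement vertex $x$ has degree $2$ and contributes nothing to the second criterion. Hence the total count of $5^+$-neighbors of $5^+$-vertices drops by exactly $2$, yielding the strict decrease.

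Then I would lift $c^*$ to a coloring $c$ of $G$ by simply setting $c(w)=c^*(w)$ for every $w \in V(G)$. Since $x$ has exactly the two neighbors $u,v$ in $G^*$, the odd condition at $x$ forces $c^*(u)\ne c^*(v)$: otherwise the unique color appearing in $N_{G^*}(x)$ has even multiplicity $2$, and no color has odd multiplicity. Thus the edge $uv$ of $G$ is properly colored by $c$, and every other edge is properly colored exactly as in $G^*$. For the odd condition in $G$: at $u$ (and symmetrically $v$), the degree is odd, so the color multiplicities in $N_G(u)$ sum to an odd number and at least one color must have odd multiplicity; this is automatic. For any $w \in V(G)\setminus\{u,v\}$ we have $N_G(w)=N_{G^*}(w)$, so the odd condition is inherited directly from $c^*$. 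This produces an odd $9$-coloring of $G$, contradicting the choice of $G$.

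The only place requiring care is the bookkeeping for the lex-order comparison; the crucial fact enabling the strict decrease on the second criterion is precisely that the edge $uv$ lies between two $5^+$-vertices (guaranteed by Lemma~\ref{minimum degree}), together with the observation that the subdivision vertex $x$ has degree $2$ and is therefore not counted in either minimality criterion.
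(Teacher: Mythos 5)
Your proof is correct and follows essentially the same route as the paper: split the edge $uv$ by a degree-$2$ vertex, invoke minimality via the drop in the count of $5^+$-neighbors of $5^+$-vertices, note that the odd condition at the subdivision vertex forces $c^*(u)\neq c^*(v)$, and observe that the odd condition at $u$ and $v$ is automatic because their degrees are odd. Your write-up is in fact more explicit than the paper's about the lex-order bookkeeping and the inheritance of the odd condition at the remaining vertices.
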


\begin{proof}
Suppose otherwise that there exist two odd adjacent vertices $u$ and $v$. By Lemma~\ref{minimum degree}, $u$ and $v$ are $5^+$-vertices. Let $G'$ be the graph obtained from $G$ by splitting edge $uv$ with a $2$-vertex $w$. Since $5^+$-vertex $u$ and $v$ have fewer $5^+$-neighbors in $G'$, there is an odd $9$-coloring $c'$ of $G'$ by the minimality of $G$. Note that $c'(u)\neq c'(v)$ since $w$ is a $2$-vertex. Let $c(z)=c'(z)$ for $z\in v(G)$.  Since $u$ and $v$ are odd vertices, $u$ and $v$ always admit an odd coloring. Then $c$ is an odd coloring of $G$, a contradiction.
\end{proof}

%\begin{lem}

%d-f is incident with at most

%\end{lem}

\begin{lem}\label{5-vetx}
Let $u$ be a $5$-vertex, $u_1,u_2,\ldots,u_5$ be the neighbors of $u$ in clockwise order, each of $u_2$ and $u_3$ be a $6$-vertex, $[uu_1u_2],[u_2uu_3],[u_3uu_4]$ be  $3$-faces. Then $G$ has no such $5$-vertex $u$.
\end{lem}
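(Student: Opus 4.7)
The plan is the standard delete-and-extend argument. Suppose for contradiction that $G$ contains such a $5$-vertex $u$. Let $G' = G - u$; then $G'$ has one fewer $4^+$-vertex than $G$, so the minimality of $G$ supplies an odd $9$-coloring $c'$ of $G'$. I would extend $c'$ to an odd $9$-coloring of $G$ by assigning $u$ a color $\alpha \in [9]$, contradicting that $G$ is a counterexample.

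To enumerate the constraints on $\alpha$, set $T := \{c'(u_1),\ldots,c'(u_5)\}$ (properness gives at most $5$ forbidden colors) and, for each $i$, let $O_i$ be the set of colors appearing an odd number of times in $N_{G'}(u_i)$. By Lemma~\ref{oddvet-non-adja} every $u_i$ has even degree in $G$, hence odd degree in $G'$, so $|O_i|$ is odd and in particular $u_i$ already satisfies its odd condition under $c'$. Placing $u$ with color $\alpha$ breaks the odd condition at $u_i$ only when $|O_i|=1$ and $\alpha$ is its unique element, and it never breaks the odd condition at $u$ itself, which has the odd number $5$ of neighbors. So the extension succeeds unless $T \cup \bigcup_{|O_i|=1} O_i = [9]$, a set of size at most $10$; the task reduces to producing at least two overlaps among these contributions.

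Next I would exploit the structure. The edges $u_1u_2, u_2u_3, u_3u_4$ force $c'(u_2)\notin\{c'(u_1),c'(u_3)\}$ and $c'(u_3)\notin\{c'(u_2),c'(u_4)\}$, so $|T|\ge 3$. The deeper leverage is that $u_2$ is a $6$-vertex: $|N_{G'}(u_2)|=5$ and this neighborhood contains both $u_1$ and $u_3$. The condition $|O_2|=1$ restricts the multiplicity profile of $\{c'(v):v\in N_{G'}(u_2)\}$ to one of $(5),(4,1),(3,2),(2,2,1)$, and a short case analysis of these profiles shows the unique odd color equals $c'(u_1)$ or $c'(u_3)$ — and therefore lies in $T$ — in every subcase except a narrow ``peripheral'' one where the lone odd color is supplied by a neighbor of $u_2$ other than $u,u_1,u_3$. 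The symmetric statement holds at $u_3$, using $u_2,u_4\in N_{G'}(u_3)$. Each overlap $O_i\cap T\neq\emptyset$ already shrinks the forbidden set below $9$ and frees a valid $\alpha$.

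The main obstacle is the combination of peripheral subcases at both $u_2$ and $u_3$ with $|O_1|=|O_4|=|O_5|=1$, the resulting singletons pairwise distinct and disjoint from $T$. To dispatch these residual cases I would perform a local recoloring: either swap two colors along a short Kempe chain meeting $N(u_2)$ or $N(u_3)$ so as to force the lone odd color into $T$ or to push $|O_2|$ (respectively $|O_3|$) off $1$, or reassign $c'(u_5)$ to create a collision inside $T$. Once these recolorings are performed, the forbidden set has cardinality at most $8$, a valid $\alpha$ is chosen, and the contradiction is closed.
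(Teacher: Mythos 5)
Your reduction starts from a different graph than the paper's: you delete the $5$-vertex $u$ itself and try to extend, whereas the paper deletes the two $6$-vertices $u_2,u_3$ (adding edges among their three outer neighbors so those keep distinct colors), extends, and then \emph{chooses} the colors of $u_2,u_3$ and recolors $u$. That difference matters, because your version has no slack left when the hard case occurs, and your argument does not close it. The hard case is exactly the one you call the ``main obstacle'': $c'(u_1),\dots,c'(u_5)$ pairwise distinct, $|O_i|=1$ for several $u_i$ with the singleton odd colors pairwise distinct and disjoint from $T$, so that the forbidden set is all of $[9]$. Nothing in the hypotheses rules this out: your intermediate claim that $|O_2|=1$ forces the lone odd color of $u_2$ into $\{c'(u_1),c'(u_3)\}$ ``except a narrow peripheral subcase'' is overstated --- in the profiles $(4,1)$, $(3,2)$ and $(2,2,1)$ the odd color can perfectly well be carried entirely by the three neighbors of $u_2$ outside $\{u,u_1,u_3\}$ (and $c'(u_1)=c'(u_3)$ is even possible, since $u_1u_3$ need not be an edge), so the ``peripheral'' situation is the generic one, not a fringe case.

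The proposed repair of the residual case is the genuine gap. A Kempe-chain swap or a reassignment of $c'(u_5)$ is not a legitimate move for odd colorings without further argument: recoloring any vertex $w$ flips the parity of two color classes in the neighborhood of \emph{every} neighbor of $w$, so it can destroy the odd condition (and even properness) at vertices about which the lemma gives you no information --- the hypotheses constrain only $u_2,u_3$ and the three incident $3$-faces, and say nothing about the degrees or neighborhoods of $u_1,u_4,u_5$ or of the outer neighbors of $u_2,u_3$. This uncontrolled recoloring is precisely where the real work lies, and it is why the paper instead removes $u_2$ and $u_3$: being $6$-vertices with only three outer neighbors each (made rainbow by the added edges), they can be re-colored greedily while tracking which odd colors this creates at $v_2,\dots$, and the remaining conflicts are absorbed by recoloring $u$, for which the paper still has freedom. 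In your setup all of $G-u$ is frozen, so the final step of your proof is an unproven assertion rather than a routine verification.
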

\begin{proof}
Suppose otherwise that $G$ has such a $5$-vertex $u$ satisfied these constraints in Lemma~\ref{5-vetx}. Let $u_2',u_2'',u_2'''\notin \{u,u_3,u_1\}$ be the neighbors of {$u_2$; $u_3',u_3'',u_3'''\notin \{u,u_2,u_4\}$ }be the neighbors of $u_3$. Let $G'$ be the graph obtained from $G-\{u_2,u_3\}$ by adding edges between any two of $u_i',u_i'',u_i'''$ if they are not adjacent in $G$ for $i\in\{2,3\}$. Since $G'$ has fewer $4^+$-vertices than $G$,  $G'$ has an odd $9$-coloring of $c'$ by the  minimality of $G$. Let $c(w)=c'(w)$ for $w\in V(G)-\{u_2,u_3\}$.  %Let $c_o(w)$ be the odd color of $w$  in $V(G)$ if this color exist.
Since $c'$ is proper, $c(u_i')\neq c(u_i'') \neq c(u_i''')$  for $i=2,3$.

If  $c(u_1)\notin \{c(u_2'),c(u_2''),c(u_2''')\}$, then $u_2$ must have an odd coloring regardless the colors of $u$ and $u_3$ in $G$. Then   color $u_2$ with the color in $\lbrack 9 \rbrack\setminus\{c(u_2'),c(u_2''),c(u_2'''), c_o(u_2'), c_o(u_2''),c_o(u_2'''),c(u_1)\}$. If either
$\{c(u_2), c(u_4)\}\nsubseteq\{c(u_3'),c(u_3''),c(u_3''')\}$
or $c(u_2)=c(u_4)$, then $u_3$ must have an odd color regardless the color of $u$ in $G$. Then color $u_3$ with the color in $\lbrack 9 \rbrack\setminus\{c(u_3'),c(u_3''),c(u_3'''),\\ c_o(u_3'),c_o(u_3''),c_o(u_3'''),c(u_2),c(u_4)\}$.   Recolor $u$ with the color in $\lbrack 9 \rbrack\setminus\{c(u_1),\\c_o(u_1),c(u_2),c(u_3),c(u_4),c_o(u_4),c(u_5),c_o(u_5)\}$. Since $u$ is a $5$-vertex, $u$ must have an odd color. Then $G$ has an odd $9$-coloring $c$, a contradiction. Thus,
$\{c(u_2), c(u_4)\}\subseteq\{c(u_3'),c(u_3''),c(u_3''')\}$
and $c(u_2)\neq c(u_4)$. We  assume that $c(u_2)=c(u_3'),c(u_4)=c(u_3'')$. In this case, we first  recolor $u$ with the color in $\lbrack 9 \rbrack\setminus\{c(u_1),c_o(u_1),c(u_2),c(u_4),c(u_5),c_o(u_5), c(u_3''')\}$. Then $u_3$ has an odd color $c(u_3''')$. Then color $u_3$ with the color in $\lbrack 9 \rbrack\setminus\{c(u_3'),c(u_3''),c(u_3'''), c_o(u_3'),c_o(u_3''),\\c_o(u_3'''),c(u),c_o(u_4)\}$, a contradiction. %Then $G$ has an odd $9$-coloring $c$,

Thus,  $c(u_1)\in \{c(u_2'),c(u_2''),c(u_2''')\}$. By symmetry, $c(u_4)\in \{c(u_3'),c(u_3''),c(u_3''')\}$. We assume that $c(u_1)=c(u_2'), c(u_4)=c(u_3')$.

If $|\{c(u_2'),c(u_2''),c(u_2'''), c_o(u_2'), c_o(u_2''),c_o(u_2'''),c(u_3''),c(u_3''')\}|\leq 7 $, then color $u_3$ with the color in $\lbrack 9 \rbrack\setminus\{c(u_3'),c(u_3''),c(u_3'''), c_o(u_3'),c_o(u_3''),c_o(u_3'''),c(u_2''),c(u_2''')\}$. Then $u_2$ must have an odd color $c(u_2'')$ or $c(u_2''')$ regardless the color of $u$ in $G$. Then color $u_2$ with the color in $\lbrack 9 \rbrack\setminus\{c(u_2'),c(u_2''),c(u_2'''), c_o(u_2'), c_o(u_2''),c_o(u_2'''),c(u_3''),c(u_3'''), c(u_3)\}$. Since  $|\{c(u_2'),c(u_2''),c(u_2'''),\\ c_o(u_2'),c_o(u_2''),c_o(u_2'''),c(u_3''),c(u_3''')\}|\leq 7 $, $u_2$ has at least one color. Then $u_3$ must have an odd color $c(u_3'')$ or $c(u_3''')$ regardless the color of $u$ in $G$. Finally recolor $u$ with the color in $\lbrack 9 \rbrack\setminus\{c(u_1),c_o(u_1),c(u_2),c(u_3),c(u_4),c_o(u_4),c(u_5),c_o(u_5)\}$, a contradiction. Thus, $|\{c(u_2'),c(u_2''),c(u_2'''), c_o(u_2'), c_o(u_2''),c_o(u_2'''),c(u_3''),c(u_3''')\}|= 8$.

First color $u_2$ with the color in $\lbrack 9 \rbrack\setminus\{c(u_2'),c(u_2''),c(u_2'''), c_o(u_2'), c_o(u_2''),c_o(u_2'''),c(u_3''),c(u_3''')\}$. Then $u_3$ must have an odd color $c(u_3'')$ or $c(u_3''')$ regardless the color of $u$ in $G$.  Let $\{c_1,c_2\}\in \lbrack 9 \rbrack \setminus \{c(u_3'),c(u_3''),c(u_3'''), c_o(u_3'),c_o(u_3''),c_o(u_3'''),c(u_2)\}$. If one of $c_1$ and $c_2$ is not in $\{c(u_2''),c_(u_2''')\}$, then color $u_3$ with this color.  Then $u_2$ must have an odd color $c(u_2'')$ or $c(u_2''')$ regardless the color of $u$.  Recolor  $u$ with the color in $\lbrack 9 \rbrack\setminus\{c(u_1),c_o(u_1),c(u_2),c(u_3),c(u_4),c_o(u_4),\\c(u_5),c_o(u_5)\}$, a contradiction. Thus,  $\{c_1,c_2\}=\{c(u_2''),c(u_2''')\}$. Then color $u_3$ with $c_1$. If $c_2$ is not equal $[9]\setminus\{c(u_1),c_o(u_1),c(u_2),c(u_3),c(u_4),c_o(u_4),c(u_5),c_o(u_5)\}$, then recolor $u$ with the color in $\lbrack 9 \rbrack\setminus\{c(u_1),c_o(u_1),c(u_2),c(u_3),c(u_4),c_o(u_4),c(u_5),c_o(u_5)\}$. Then $u_2$ has an odd color $c_2$,  a contradiction. Thus, $c_2$ is  equal $\lbrack9\rbrack\setminus\{c(u_1),c_o(u_1),c(u_2),c(u_3),c(u_4),c_o(u_4),c(u_5),c_o(u_5)\}$. In this case, recolor $u$ with $c(u_2)$. Since $c_2$ is equal to the only color in  $\lbrack9\rbrack\setminus\{c(u_1),c_o(u_1),c(u_2),\\c(u_3),c(u_4),c_o(u_4),c(u_5),c_o(u_5)\}$, the  color of $u$ does not destroy 
% the proper and odd coloring of $u_3,u_4,u_5$ and proper of $u_1$. 
the proper coloring of $u_1,u_3,u_4,u_5$ and the odd coloring of $u_3,u_4,u_5$.
Then we choose one of $c(u_3'')$ and $c(u_3''')$ to color $u_2$ such that $u_1$ has an odd color. Then $u_2$ has an odd coloring $c_2$, $u_3$ has an odd coloring $c(u_3''')$ or $c(u_3'')$, a contradiction.
\end{proof}

A $6$-vertex $v$ is {\em special} if  $v$ is incident with six $3$-faces. A vertex $u$ is {\em free} to the vertex $v$ if $u$ is adjacent to $v$ and $|C_o(u)|\ge 3$. Especially, we use   $\overline{c_o}(u)$ to denote the the only odd color of $u$ if $|C_o(u)|=1$. Note that if $u$ is a $5$-vertex, then $|C_o(u)|=1$, $3$ or $5$. 

In the following Lemmas \ref{32123}-\ref{6-v}, let $u$  be a special $6$-vertex, $u_1,u_2,\ldots,u_6$ be the neighbors of $u$ in the clockwise order,  $v_{12},v_1,v_2 \notin \{u_6,u,u_2\}$ be the neighbors of $u_1$; $v_2,v_3,v_4 \notin \{u_1,u,u_3\}$ be the neighbors of $u_2$; $v_4,v_5,v_6 \notin \{u_2,u,u_4\}$ be the neighbors of $u_3$; $v_6,v_7,v_8 \notin \{u_3,u,u_5\}$ be the neighbors of $u_4$; $v_8,v_9,v_{10} \notin \{u_4,u,u_6\}$ be the neighbors of $u_5$; $v_{10},v_{11},v_{12} \notin \{u_5,u,u_1\}$ be the neighbors of $u_6$; each of $u,u_1,\ldots,u_6,v_1,v_2,\ldots,v_{12}$ is  a special $6$-vertex, as is depicted in Figure 1.
 \vskip 0.5cm
\unitlength=0.25mm
\begin{picture}(10,20)(0,0)
\put(230, -10){\makebox(0,0){$\bullet$}} \put(225, -20) {\scriptsize {\em $u_6$}} \put(230, -10){\line(1,0){40}}   \put(230, -10){\line(2,3){20}}
\put(230, -10){\line(-2,3){20}} \put(230, -10){\line(-1,0){40}}

\put(270, -10){\makebox(0,0){$\bullet$}}  \put(265, -20){\scriptsize {\em $u_1$}} 
 \put(270, -10){\line(2,-3){20}} \put(270, -10){\line(-2,3){20}} 
 \put(270, -10){\line(2,3){20}}  \put(270, -10){\line(1,0){40}} 
 
\put(290, -40){\makebox(0,0){$\bullet$}} \put(278, -38){\scriptsize {\em $u_2$}} 
\put(290, -40){\line(-2,-3){20}} \put(290, -40){\line(2,3){20}}
\put(290, -40){\line(1,0){40}} \put(290, -40){\line(2,-3){20}}

\put(270, -70){\makebox(0,0){$\bullet$}} \put(265, -64){\scriptsize {\em $u_3$}}
\put(270, -70){\line(-1,0){40}} \put(270, -70){\line(1,0){40}}
\put(270, -70){\line(-2,-3){20}}  \put(270, -70){\line(2,-3){20}}

\put(230, -70){\makebox(0,0){$\bullet$}} \put(225, -64){\scriptsize {\em $u_4$}}
\put(230, -70){\line(-2,3){20}} \put(230, -70){\line(2,-3){20}}
\put(230, -70){\line(-2,-3){20}} \put(230, -70){\line(-1,0){40}}

\put(210, -40){\makebox(0,0){$\bullet$}} \put(213, -38){\scriptsize {\em $u_5$}}
\put(210, -40){\line(2,3){20}}  \put(210, -40){\line(-2,3){20}}
\put(210, -40){\line(-2,-3){20}}   \put(210, -40){\line(-1,0){40}}

\put(210,  20){\makebox(0,0){$\bullet$}} \put(210, 23){\scriptsize {\em $v_{11}$}}
\put(210,  20){\line(1,0){40}}  \put(210,  20){\line(-2,-3){20}}

\put(250,  20){\makebox(0,0){$\bullet$}} \put(250, 23){\scriptsize {\em $v_{12}$}}
\put(250,  20){\line(1,0){40}} 

\put(290,  20){\makebox(0,0){$\bullet$}} \put(290, 23){\scriptsize {\em $v_1$}}
\put(290,  20){\line(2,-3){20}} 

\put(310,  -10){\makebox(0,0){$\bullet$}} \put(313, -11){\scriptsize {\em $v_2$}}
\put(310,  -10){\line(2,-3){20}}

\put(330,  -40){\makebox(0,0){$\bullet$}} \put(333, -41){\scriptsize {\em $v_3$}}
\put(330,  -40){\line(-2,-3){20}} 

\put(310,  -70){\makebox(0,0){$\bullet$}} \put(313, -71){\scriptsize {\em $v_4$}}
\put(310,  -70){\line(-2,-3){20}}

\put(290,  -100){\makebox(0,0){$\bullet$}} \put(293, -100){\scriptsize {\em $v_5$}}
\put(290,  -100){\line(-1,0){40}} 

\put(250,  -100){\makebox(0,0){$\bullet$}} \put(253, -98){\scriptsize {\em $v_6$}}
\put(250,  -100){\line(-1,0){40}} 

\put(210,  -100){\makebox(0,0){$\bullet$}} \put(213, -98){\scriptsize {\em $v_7$}}
\put(210,  -100){\line(-2,3){20}}

\put(190,  -70){\makebox(0,0){$\bullet$}} \put(178, -71){\scriptsize {\em $v_8$}}
\put(190,  -70){\line(-2,3){20}}

\put(170,  -40){\makebox(0,0){$\bullet$}} \put(158, -41){\scriptsize {\em $v_9$}}
\put(170,  -40){\line(2,3){20}}

\put(190,  -10){\makebox(0,0){$\bullet$}} \put(174, -11){\scriptsize {\em $v_{10}$}}

\put(250, -40){\makebox(0,0){$\bullet$}} \put(252, -40){\scriptsize {\em $u$}}
\put(250, -40){\line(2,3){20}} \put(250, -40){\line(1,0){40}} 
\put(250, -40){\line(2,-3){20}}  \put(250, -40){\line(-2,-3){20}}
\put(250, -40){\line(-1,0){40}}  \put(250, -40){\line(-2,3){20}}

\put(145, -120){\scriptsize {\em Figure 1. A cluster of special 6-vertices }}

\end{picture}
 \vskip 4cm

In the following Lemmas \ref{32123}-\ref{1234}, let $c$ be an odd $9$-coloring of $G-u$.

\begin{lem}\label{32123}
Let $c(u_1)\neq c(u_2)\neq c(u_3)$, $c(u_2)=c(u_6)$, $c(u_3)=c(u_5)$,  each of $u_1,u_2$ and $u_6$  has exactly one odd color and $\overline{c_o}(u_1)\neq \overline{c_o}(u_2) \neq \overline{c_o}(u_6)$,    $\overline{c_o}(u_i)\notin \{c(u_1),c(u_2),c(u_3),c(u_4)\}$ for each $i\in \{1,2,6\}$.  Then $u_1$ can be recolored
% another color avoiding $\{c(u_1),c(u_2),c(u_3),c(u_4)\}$ 
with one color in $\lbrack 9 \rbrack \setminus \{c(u_1),c(u_2), c(u_3), c(u_5)\}$
such that $u_2$ and $u_6$ are free to $u$.

\end{lem}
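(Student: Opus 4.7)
The plan is a short counting-plus-parity argument: enumerate the colors that the new color $c^{\ast}$ of $u_1$ must avoid, show that strictly fewer than $9$ colors are forbidden so an admissible choice exists inside $[9]\setminus\{c(u_1),c(u_2),c(u_3),c(u_5)\}$, and then verify by a parity flip that any such choice automatically makes $u_2$ and $u_6$ free to $u$.

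For the counting step, using $c(u_2)=c(u_6)$ and $c(u_3)=c(u_5)$, the statement's allowed set is $[9]\setminus\{c(u_1),c(u_2),c(u_3)\}$, which excludes at most $3$ colors. Properness at $u_1$ requires $c^{\ast}$ to differ from the colors of $u_1$'s neighbors in $G-u$, namely $c(u_2),c(u_6),c(v_{12}),c(v_1),c(v_2)$; after absorbing $c(u_6)=c(u_2)$ into the first group, this adds at most the three colors $c(v_{12}),c(v_1),c(v_2)$. Finally, to force $u_2$ and $u_6$ to be free, I require $c^{\ast}\notin\{\overline{c_o}(u_2),\overline{c_o}(u_6)\}$; by hypothesis these two values are distinct and both avoid $\{c(u_1),c(u_2),c(u_3),c(u_4)\}$, so they contribute at most two further exclusions. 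Summing, at most $3+3+2=8$ colors are forbidden, so some $c^{\ast}\in[9]\setminus\{c(u_1),c(u_2),c(u_3),c(u_5)\}$ satisfies every constraint.

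For the parity step, consider $u_2$ first. By hypothesis $|C_o(u_2)|=1$ with unique odd color $\overline{c_o}(u_2)$, so every other color has even multiplicity in the $G-u$ neighborhood of $u_2$; since $u_1\in N(u_2)$ and $c(u_1)\neq \overline{c_o}(u_2)$, the multiplicity of $c(u_1)$ there is even and at least $2$. Recoloring $u_1$ to $c^{\ast}$ decreases the multiplicity of $c(u_1)$ at $u_2$ by one (so $c(u_1)$ becomes odd there) and increases that of $c^{\ast}$ by one; because $c^{\ast}\neq \overline{c_o}(u_2)$ and $c^{\ast}\neq c(u_1)$, its old multiplicity was even and becomes odd. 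The parity of $\overline{c_o}(u_2)$ is untouched, so the three pairwise distinct colors $\overline{c_o}(u_2),c(u_1),c^{\ast}$ all lie in $C_o(u_2)$ after recoloring, giving $|C_o(u_2)|\geq 3$. The identical argument with $u_6$ in place of $u_2$ (using $u_1\in N(u_6)$ together with $c(u_1),c^{\ast}\neq \overline{c_o}(u_6)$) yields $|C_o(u_6)|\geq 3$, so both are free to $u$.

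The only real obstacle is the bookkeeping in the counting step: one genuinely needs the full strength of the hypothesis that $\overline{c_o}(u_2)$ and $\overline{c_o}(u_6)$ are distinct and both lie outside $\{c(u_1),c(u_2),c(u_3),c(u_4)\}$, because otherwise the eight potential exclusions could collapse in a way that leaves no admissible color in $[9]\setminus\{c(u_1),c(u_2),c(u_3),c(u_5)\}$. The parity calculation itself is entirely automatic once $c^{\ast}$ has been chosen to avoid the two single odd colors.
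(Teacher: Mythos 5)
Your counting step and the parity argument for $u_2$ and $u_6$ are correct, and the parity flip is in fact a cleaner route to freeness than the paper's explicit enumeration of the colors on $v_3,v_4,v_{10},v_{11}$. But there is a genuine gap: you never check that recoloring $u_1$ leaves the coloring of $G-u$ an \emph{odd} coloring at $u_1$'s other neighbors $v_1,v_2,v_{12}$. You enforce properness at these vertices, but changing $u_1$'s color toggles the parity of two colors (the old color $c(u_1)$ and the new color $c^{\ast}$) in each of their neighborhoods; if, say, $C_o(v_2)=\{c(u_1),c^{\ast}\}$ exactly, then $v_2$ is left with no odd color. Since $v_1,v_2,v_{12}$ are not adjacent to $u$, the later coloring of $u$ cannot repair this, so the modified coloring could fail to extend to an odd $9$-coloring of $G$ — which is the whole point of the lemma inside Lemma~\ref{6-v}. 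The paper's proof spends most of its effort exactly here: it first pins down the local colors ($c(v_1)=\overline{c_o}(u_1)$, $c(v_2)=c(v_{12})=c(u_3)$, $\{c(v_3),c(v_4)\}=\{\overline{c_o}(u_2),c(u_1)\}$, $\{c(v_{10}),c(v_{11})\}=\{\overline{c_o}(u_6),c(u_1)\}$), chooses the new color also avoiding $c_o(v_1)$ (and $c(u_4)$), and then verifies that $v_2$ and $v_{12}$ each see four pairwise distinct colors among their known neighbors, hence retain an odd color.

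Note also that the obvious patch — adding the odd colors of $v_1,v_2,v_{12}$ to your forbidden list — does not come for free: your generic count is already at $8$, so extra exclusions could exhaust all $9$ colors. The paper stays within budget precisely because the structural facts above show $c(v_2)=c(v_{12})=c(u_3)$ is already excluded and only $c_o(v_1)$ needs to be added, and because for $v_2,v_{12}$ it argues oddness directly rather than by exclusion. So the missing verification is not a routine afterthought; it requires the structural analysis your proposal skips.
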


\begin{proof}
Since $c(u_2)=c(u_6)$ and $u_1$  has exactly one odd color in $G-u$, $c(v_2)=c(v_{12})$, $c(v_1)=\overline{c_o}(u_1)$. Since $c$ is proper, $c(v_{12})=c(v_2)\neq c(u_1)\neq c(u_2)$.  If $c(v_2)=c(v_{12})\neq c(u_3)$, then  $\overline{c_o}(u_2)=c(v_2)=c(v_{12})=\overline{c_o}(u_6)$, a contradiction. Thus, $c(v_2)=c(v_{12})=c(u_3)$. Since each of $u_2$ and $u_6$ has exactly one odd color and $c(u_5)=c(u_3)\neq c(u_1)$, $\{c(v_3),c(v_4)\}=\{\overline{c_o}(u_2),c(u_1)\}$ and  $\{c(v_{10}),c(v_{11})\}=\{\overline{c_o}(u_6),c(u_1)\}$.   Let $c_1$ be the color in  $[9]\setminus\{c(u_1),c(u_2),c(u_3),c(u_4),c(v_1),c_o(v_1),\overline{c_o}(u_2),\overline{c_o}(u_6)\}$.
Recolor $u_1$ with $c_1$. Since $c(u_3)=c(v_2)=c(v_{12})$, $c_1\neq c(v_2)$ and  $c_1\neq c(v_{12})$. Thus, each of $v_1,v_2,v_{12}$ is proper. Observe the neighbors of $v_2$,  $c(v_1)\neq c(u_2)\neq c(v_3)$ since $c(v_3)\in\{c_o(u_2),c(u_1)\}$ and $c(v_1)=\overline{c_o}(u_1)$. Recall that $c_1\notin\{c(v_1),c(u_2),c(u_1),\overline{c_o}(u_2)\}$. Thus, $v_2$ must have an odd coloring in the case of $u_1$ with color $c_1$. By symmetry,  $v_{12}$ must have an odd coloring in the case of $u_1$ with color $c_1$.  Observe the neighbors of $u_2$, $c(u_3)\neq c(v_3) \neq c(v_4)\neq c_1$. Then $|C_o(u_2)|\ge3$ 
% $u_2$  must have at least three odd colors 
regardless of the color of $u$ in $G$. Thus, $u_2$ is free to $u$. By symmetry, $u_6$ is free to $u$.
\end{proof}

\begin{lem}\label{42123}

% If $c(u_6)=c(u_2)\neq c(u_1)\neq c(u_3) \neq c(u_5)$,   then  either at least one  of $u_1,u_2$ and $u_6$ is free to $u$ or   $|\{\overline{c_o}(u_1),\overline{c_o}(u_2),\overline{c_o}(u_6)\}|\leq 2$ or   there is a $i\in\{1,2,6\}$ such that $\overline{c_o}(u_i) \in \{ c(u_1),c(u_2),c(u_3),c(u_5),c(u_6)\}$.
If $c(u_6)=c(u_2)\neq c(u_1)\neq c(u_3) \neq c(u_5)$, $|C_o(u_1)|=1$,  $|C_o(u_2)|=1$ and $|C_o(u_2)|=1$, then  $\{\overline{c_o}(u_1), \overline{c_o}(u_2), \overline{c_o}(u_6)\}$ occupies at most two different colors in $\lbrack 9 \rbrack \setminus \{c(u_1),c(u_2), c(u_3), c(u_5)\}$ together.
\end{lem}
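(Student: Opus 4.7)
I argue by contradiction: suppose $\overline{c_o}(u_1)$, $\overline{c_o}(u_2)$, $\overline{c_o}(u_6)$ are three pairwise distinct colors all lying in $S := [9] \setminus \{c(u_1), c(u_2), c(u_3), c(u_5)\}$. The plan is to extract tight structural information about the colors of the $v_i$'s from the three $|C_o(u_i)|=1$ hypotheses, and then derive either a coincidence among the three odd colors or the containment of one of them in $\{c(u_1), c(u_2), c(u_3), c(u_5)\}$. First I study $u_2$'s five non-$u$ neighbors $\{u_1, u_3, v_2, v_3, v_4\}$: since $c(u_1) \neq c(u_3)$ and $\overline{c_o}(u_2) \notin \{c(u_1), c(u_3)\}$, the condition $|C_o(u_2)|=1$ forces both $c(u_1)$ and $c(u_3)$ to appear an even, hence exactly twice, number of times in the color multiset. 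Thus exactly one of $v_2, v_3, v_4$ carries $c(u_1)$, one carries $c(u_3)$, and one carries $\overline{c_o}(u_2)$; the edge $v_2 u_1$ further yields $c(v_2) \in \{c(u_3), \overline{c_o}(u_2)\}$. An analogous parity count on $u_1$'s neighborhood $\{u_2, u_6, v_1, v_2, v_{12}\}$, using $c(u_2)=c(u_6)$ and $\overline{c_o}(u_1) \neq c(u_2)$, yields three possible patterns for $(c(v_1), c(v_2), c(v_{12}))$: (P1) all three equal $\overline{c_o}(u_1)$; (P2) two share a common value $Y \notin \{c(u_2), \overline{c_o}(u_1)\}$ and the third equals $\overline{c_o}(u_1)$; or (P3) two equal $c(u_2)$ and the third equals $\overline{c_o}(u_1)$.

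Because $c(v_2) \neq c(u_2)$ by the $u_2$-analysis, in (P1), in (P3), and in (P2) when $v_2$ is the singleton, we obtain $c(v_2) = \overline{c_o}(u_1) \in \{c(u_3), \overline{c_o}(u_2)\}$; since $\overline{c_o}(u_1) \neq c(u_3)$ this forces $\overline{c_o}(u_1) = \overline{c_o}(u_2)$, contradicting distinctness. Only (P2) with $v_1$ or $v_{12}$ as the singleton remains. I now run the parallel analysis on $u_6$'s neighborhood, split by whether $c(u_5)=c(u_1)$. When $c(u_5) \neq c(u_1)$, the same parity argument yields $c(v_{12}) \in \{c(u_5), \overline{c_o}(u_6)\}$. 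If $v_{12}$ is the singleton of (P2), then $c(v_{12}) = \overline{c_o}(u_1) \neq c(u_5)$ gives $\overline{c_o}(u_1) = \overline{c_o}(u_6)$, a contradiction. If $v_1$ is the singleton, then $Y = c(v_2) = c(v_{12})$ lies in $\{c(u_3), \overline{c_o}(u_2)\} \cap \{c(u_5), \overline{c_o}(u_6)\}$, and this intersection is empty by $c(u_3) \neq c(u_5)$, $\overline{c_o}(u_6) \neq c(u_3)$, $\overline{c_o}(u_2) \neq c(u_5)$, and the distinctness $\overline{c_o}(u_2) \neq \overline{c_o}(u_6)$, again a contradiction.

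The main obstacle is the case $c(u_5)=c(u_1)$, where $c(u_1)$ already appears twice among $u_6$'s fixed neighbors and the parity count only yields the weaker alternative that the number of $v_{10}, v_{11}, v_{12}$ carrying $c(u_1)$ is $0$ or $2$. I would enumerate the resulting sub-patterns for $(c(v_{10}), c(v_{11}), c(v_{12}))$ and combine each with the surviving (P2) sub-cases; many collapse to a direct coincidence $\overline{c_o}(u_6) \in \{\overline{c_o}(u_1), \overline{c_o}(u_2), c(u_3)\}$, or to a forbidden adjacency identification such as $c(v_{11}) = c(v_{12})$ (using that $v_{11}v_{12}$ is an edge in Figure 1). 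For any stubborn configuration, the shrinkage of the properness-forbidden set for $c(u)$ caused by $c(u_5)=c(u_1)$ to at most four colors, combined with the three odd-color-forbiddens all lying in $S$ and being pairwise distinct (so sharing at most $c(u_4)$ with the properness set), leaves enough room to choose $c(u)$ and thereby extend $c$ to an odd $9$-coloring of $G$, contradicting the minimality of $G$.
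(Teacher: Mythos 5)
Your main case, $c(u_1)\neq c(u_5)$, is correct and is essentially the paper's own argument: the parity count at $u_1$ (using $\overline{c_o}(u_1)\neq c(u_2)=c(u_6)$ and the adjacencies of $v_1$) forces $c(v_2)=c(v_{12})$ and $c(v_1)=\overline{c_o}(u_1)$; the counts at $u_2$ and $u_6$ force $c(v_2)\in\{c(u_3),\overline{c_o}(u_2)\}$ and $c(v_{12})\in\{c(u_5),\overline{c_o}(u_6)\}$; and these two sets are disjoint. The paper phrases the same contradiction by first excluding $c(v_2)\in\{c(u_3),c(u_5)\}$ and then concluding $\overline{c_o}(u_2)=c(v_2)=c(v_{12})=\overline{c_o}(u_6)$, so up to packaging your completed portion and the paper's proof coincide.

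The genuine gap is the case $c(u_1)=c(u_5)$, which you only sketch. The enumeration is not carried out, and the fallback for ``stubborn configurations'' does not work: extending $c$ to an odd $9$-coloring of $G$ requires $c(u)$ to avoid not only the properness colors and $\overline{c_o}(u_1),\overline{c_o}(u_2),\overline{c_o}(u_6)$, but also the (possibly unique) odd colors of $u_3,u_4,u_5$, about which the hypotheses say nothing; the forbidden set can thus exceed $9$ colors, and in the intended application (Lemma~\ref{6-v}) the six odd colors together with $\{c(u_1),\dots,c(u_6)\}$ do exhaust $[9]$, so ``enough room'' is precisely what cannot be assumed. Moreover, no local enumeration can close this case: with $c(u_1)=c(u_5)=1$, $c(u_2)=c(u_6)=2$, $c(u_3)=3$, the assignment $c(v_1)=4$, $c(v_2)=c(v_{12})=c(v_{10})=5$, $c(v_3)=3$, $c(v_4)=1$, $c(v_{11})=6$ is proper on all relevant edges, gives $|C_o(u_1)|=|C_o(u_2)|=|C_o(u_6)|=1$, and yields $\overline{c_o}(u_1)=4$, $\overline{c_o}(u_2)=5$, $\overline{c_o}(u_6)=6$, three distinct colors outside $\{c(u_1),c(u_2),c(u_3),c(u_5)\}$; so the conclusion cannot be derived from the stated local hypotheses when $c(u_1)=c(u_5)$. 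The paper's proof tacitly assumes $c(u_1)\neq c(u_5)$ (its chain of inequalities is meant to make the four listed colors pairwise distinct, as indeed holds in every application), and its argument is only valid under that reading; your completed portion proves exactly that version, while your attempt to also cover $c(u_1)=c(u_5)$ is incomplete and, as sketched, unsalvageable.
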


\begin{proof}

Suppose  otherwise that $\{\overline{c_o}(u_1), \overline{c_o}(u_2), \overline{c_o}(u_6)\}$ occupies three colors in $\lbrack 9 \rbrack \setminus \{c(u_1),c(u_2), \\c(u_3), c(u_5)\}$ together. Thus, $\overline{c_o}(u_1)\neq \overline{c_o}(u_2)\neq \overline{c_o}(u_6)$. %none of $u_1,u_2$ and $u_6$ is free to $u$ in $G-u$. We prove that   $|\{\overline{c_o}(u_1),\overline{c_o}(u_2),\overline{c_o}(u_6)\}|\leq 2$  or one of $\overline{c_o}(u_1),\overline{c_o}(u_2)$ and $\overline{c_o}(u_6)$ is   in $\{c(u_1),c(u_2),c(u_3),c(u_6)\}$.
%Since none of  $u_1,u_2$ and $u_6$ is free to $u$, each of $u_1,u_2$ and $u_6$ has exactly one odd color in the coloring $c$ of $G-u$. 
Since $u_1$ has an odd color and $c(u_2)=c(u_6)$, $c(v_2)=c(v_{12})$ and $c(v_1)=\overline{c_o}(v_1)$. %If $c(v_1)\in\{c(u_1),c(u_2),c(u_3),c(u_6)\}$, then we prove. Thus, $c(v_1)\notin\{c(u_1),c(u_2),c(u_3),c(u_6)\}$. 
If $c(v_2)=c(u_3)$, then $c(v_{12})=c(u_3)$. Since $c(u_3)\neq c(u_5)$, $u_6$ has an odd color $c(v_{12})=c(u_3)$, a contradiction.  %Then $\overline{c_o}(u_6)=c(u_3)$ and we prove. 
If $c(v_2)=c(u_5)$, then $u_2$ has an odd color $c(v_2)=c(u_5)$ since $c(u_3)\neq c(u_5)$. Then $\overline{c_o}(u_2)=c(u_5)$, a contradiction. Thus, $c(v_2)\notin\{c(u_3),c(u_5)\}$. Then $\{c(v_3),c(v_4)\}=\{c(u_1),c(u_3)\}$ since $u_2$ has  exactly one odd color and $c(u_1)\neq c(u_3)$. Since $u_6$ has  exactly one odd color and $c(u_1)\neq c(u_5)$,   $\{c(v_{10}),c(v_{11})\}=\{c(u_1),c(u_5)\}$. Then $\overline{c_o}(u_2)=c(v_2)=c(v_{12})=\overline{c_o}(u_6)$, a contradiction. %Then $|\{\overline{c_o}(u_1),\overline{c_o}(u_2),\overline{c_o}(u_6)\}|\leq 2$ and we prove.
\end{proof}

%$u_1, u_2\in\{5,...,9\}$

%$u_1\in\{5,...,9\}$

%$ u_2\in\{5,...,9\}$

%$\emptyset\in\{5,...,9\}$

\begin{lem}\label{1234}

If $c(u_1)\neq c(u_2)\neq c(u_3)\neq c(u_6)$,  $|C_o(u_1)|=1$ and $|C_o(u_2)|=1$, then  $\{\overline{c_o}(u_1), \overline{c_o}(u_2)\}$ occupies at most one  color in $\lbrack 9 \rbrack \setminus \{c(u_1),c(u_2), c(u_3), c(u_6)\}$.%Then either at least one of $u_1$ and $u_2$ is free to $u$ or  $\overline{c_o}(u_1)=\overline{c_o}(u_2)$ or at least one of $\overline{c_o}(u_1)$ and $\overline{c_o}(u_2)$ is  in $\{c(u_1),c(u_2),c(u_3),c(u_6)\}$ in $G-u$.

\end{lem}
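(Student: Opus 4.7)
The plan is to argue by contradiction: assume $a:=\overline{c_o}(u_1)$ and $b:=\overline{c_o}(u_2)$ are two distinct colors both contained in $[9]\setminus\{c(u_1),c(u_2),c(u_3),c(u_6)\}$. I will then exploit the fact that $v_2$ is a common neighbor of $u_1$ and $u_2$, so that its color is constrained simultaneously from both sides, and show these two constraints are incompatible.

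First I would pin down $c(v_2)$ from the $u_1$ side. The five neighbors of $u_1$ in $G-u$ are $u_2,u_6,v_1,v_2,v_{12}$, and by hypothesis exactly one color, namely $a$, has odd multiplicity in the induced multiset, with $a\notin\{c(u_2),c(u_6)\}$. Reading the chain hypothesis as the statement that $c(u_1),c(u_2),c(u_3),c(u_6)$ are pairwise distinct, a short parity enumeration rules out every partition type of this $5$-element multiset except $2{+}2{+}1$; moreover the two ``$2$''-colors are forced to be $c(u_2)$ and $c(u_6)$, and the ``$1$''-color to be $a$. Combined with the proper-coloring constraints $c(v_2)\ne c(u_2)$ and $c(v_{12})\ne c(u_6)$, which come from the edges $v_2u_2$ and $v_{12}u_6$, only three placements of the occurrences of $c(u_2),c(u_6),a$ among $v_1,v_2,v_{12}$ are admissible, and in each of them $c(v_2)\in\{c(u_6),a\}$.

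Next I would run the symmetric argument on the $u_2$ side. The neighbors of $u_2$ in $G-u$ are $u_1,u_3,v_2,v_3,v_4$, and since $b\notin\{c(u_1),c(u_3)\}$ the same enumeration, combined with the edge constraints $c(v_2)\ne c(u_1)$ and $c(v_4)\ne c(u_3)$, yields $c(v_2)\in\{c(u_3),b\}$. Intersecting the two conclusions gives $c(v_2)\in\{c(u_6),a\}\cap\{c(u_3),b\}$; using $c(u_6)\ne c(u_3)$, together with $c(u_6)\ne b$ and $a\ne c(u_3)$ (since $a,b\notin\{c(u_3),c(u_6)\}$), and $a\ne b$ by assumption, this intersection is empty, the required contradiction. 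The main obstacle is the parity enumeration in step one (and its mirror in step two), since a priori a $5$-element multiset has many partition types; the key leverage is that the unique odd color $a$ avoids $\{c(u_2),c(u_6)\}$ and that $c(u_2)\ne c(u_6)$, which together force $c(u_2)$ and $c(u_6)$ into even-multiplicity slots. With only three free neighbors remaining, everything collapses to the $2{+}2{+}1$ pattern, after which the set-intersection finale is routine.
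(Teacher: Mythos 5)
Your proof is correct and follows essentially the same route as the paper's: both arguments use parity to force the $v$-side neighborhood colors of $u_2$ and $u_1$ to be exactly $\{c(u_1),c(u_3),\overline{c_o}(u_2)\}$ and $\{c(u_2),c(u_6),\overline{c_o}(u_1)\}$ respectively, and then exploit the common neighbor $v_2$, the paper merely finishing with a two-case analysis on $c(v_2)\in\{\overline{c_o}(u_2),c(u_3)\}$ rather than your intersection of the two constraints on $c(v_2)$. Like the paper, you read the chained hypothesis as saying the four colors are pairwise distinct (e.g.\ $c(u_1)\neq c(u_3)$ and $c(u_2)\neq c(u_6)$ are needed), which is how the lemma is applied later, so this is consistent.
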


\begin{proof}

Suppose otherwise that $\{\overline{c_o}(u_1), \overline{c_o}(u_2)\}$ occupies two  colors in $\lbrack 9 \rbrack \setminus \{c(u_1),c(u_2), c(u_3), c(u_6)\}$. Thus, $\overline{c_o}(u_1)\neq \overline{c_o}(u_2)$. 
%Suppose  that none of $u_1$ and $u_2$ is free to $u$ in $G-u$. We prove that  $\overline{c_o}(u_1)=\overline{c_o}(u_2)$ or one of $\overline{c_o}(u_1)$ and $\overline{c_o}(u_2)$ is   in $\{c(u_1),c(u_2),c(u_3),c(u_6)\}$.
%Since none of $u_1$ and $u_2$ is free to $u$, each of $u_1$ and $u_2$ has exactly one odd color in the coloring $c$ of $G-u$.
Since $u_2$ has an odd color and $c(u_3)\neq c(u_1)$,  $\{\overline{c_o}(u_2), c(u_3), c(u_1)\}=\{c(v_2),c(v_3),c(v_4)\}$. Since $c$ is proper and $v_2u_1\in E(G-u)$, $c(v_2)=\overline{c_o}(u_2)$ or $c(v_2)=c(u_3)$. In the former case, %if $\overline{c_o}(u_2)\in \{c(u_1),c(u_2),c(u_3),c(u_6)\}$, then we   prove. Thus, $\overline{c_o}\notin \{c(u_1),c(u_2),c(u_3),c(u_6)\}$.
since $u_1$ has exactly one odd color and $\overline{c_o}(u_2)\neq c(u_2)\neq c(u_6) $,  $\{c(v_1),c(v_{12})\}=\{c(u_2),c(u_6)\}$.  Then $\overline{c_o}(u_1)=c(v_2)=\overline{c_o}(u_2)$, a contradiction.
In the latter case, $\{c(v_1),c(v_{12})\}=\{c(u_2),c(u_6)\}$ since $u_1$ has exactly one odd color and $c(u_3)\neq c(u_2)\neq c(u_6)$. Then $\overline{c_o}(u_1)=c(v_2)=c(u_3)$, a contradiction.
\end{proof}

\begin{lem}\label{6-v}

$G$ has no configure in Figure 1.

\end{lem}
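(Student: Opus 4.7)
Assume for contradiction that $G$ contains the configuration in Figure 1. Since $u$ is a $4^+$-vertex, the graph $G-u$ has strictly fewer $4^+$-vertices than $G$, so by the minimality of $G$ it admits an odd $9$-coloring $c$. The plan is to derive a contradiction by extending $c$ to an odd $9$-coloring of $G$ by assigning a color to $u$. An extension requires $c(u)\in[9]$ with $c(u)\ne c(u_i)$ for every $i$ (proper), each $u_i$ still has an odd color in $G$, and $u$ itself has an odd color in $G$.

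The key observations are: (a) because $N_G(u)=\{u_1,\dots,u_6\}$, whether $u$ has an odd color in $G$ depends only on the multiset $M=\{c(u_1),\dots,c(u_6)\}$, and a short parity argument (using that consecutive $u_i$'s are adjacent via the $3$-faces of $u$) shows $M$ has no color of odd multiplicity if and only if it is a \emph{$(2,2,2)$-pattern}, i.e.\ three colors each appearing exactly twice; (b) adding $u$ preserves an odd color at $u_i$ unless $|C_o(u_i)|=1$ and $c(u)=\overline{c_o}(u_i)$. Define
\[
F=\{c(u_1),\dots,c(u_6)\}\cup\{\overline{c_o}(u_i):|C_o(u_i)|=1\}.
\]
Whenever $M$ is not a $(2,2,2)$-pattern and $|F|\le 8$, any color in $[9]\setminus F$ yields a valid extension, a contradiction. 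Thus the task reduces to ruling out the two remaining cases: (I) $M$ is a $(2,2,2)$-pattern, and (II) $|F|=9$.

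For case (I), up to the rotational/reflective symmetry of the cluster in Figure 1 the $(2,2,2)$-patterns are $(a,b,c,a,b,c)$, $(a,b,c,a,c,b)$, and $(a,b,c,b,a,c)$. The pattern $(a,b,c,a,c,b)$ satisfies $c(u_2)=c(u_6)$ and $c(u_3)=c(u_5)$, which is exactly the color part of the hypothesis of Lemma~\ref{32123}; when the odd-color hypotheses are also met, that lemma lets us recolor $u_1$ so that $u_2$ and $u_6$ become free to $u$, simultaneously breaking the $(2,2,2)$-pattern in $M$ and removing their contributions from $F$. The other two $(2,2,2)$-patterns reduce to the preceding one by a suitable rotation of the cyclic labeling $u_1,\dots,u_6$ (using the symmetry of the cluster). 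For case (II), the equality $|F|=9$ combined with $|\{c(u_1),\dots,c(u_6)\}|\le 6$ forces at least three of the $u_i$'s to have $|C_o(u_i)|=1$ with pairwise distinct $\overline{c_o}$ values lying outside $\{c(u_j)\}$; by rotating one can bring such a triple into the positions covered by Lemma~\ref{42123} or Lemma~\ref{1234}, whose conclusions cap the number of distinct $\overline{c_o}$ values in $[9]\setminus\{c(u_j)\}$ and hence force $|F|\le 8$, or alternatively enable a Lemma~\ref{32123} recoloring that shrinks $F$.

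The main technical obstacle is the bookkeeping in case (II): for every possible support pattern of indices $i$ with $|C_o(u_i)|=1$ one must identify a rotation of the labeling that triggers the hypothesis of Lemma~\ref{32123}, \ref{42123}, or \ref{1234}, and verify that after any Lemma~\ref{32123}-recoloring the updated forbidden set is strictly smaller than $9$. In case (I) the sub-difficulty is checking the odd-color hypotheses on $\overline{c_o}(u_1),\overline{c_o}(u_2),\overline{c_o}(u_6)$ that Lemma~\ref{32123} requires, which in the bad subcases is precisely what Lemmas~\ref{42123} and \ref{1234} rule out. This combined case analysis, exploiting the strong symmetry of the cluster of twelve special $6$-vertices in Figure~1 and the three tool lemmas above, is where the heart of the proof lies.
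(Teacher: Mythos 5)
Your high-level bookkeeping (properness of $c(u)$, the parity obstruction at $u$, and the forbidden set coming from neighbors with a unique odd color) matches the spirit of the paper's Claim, but there is a genuine gap, and it is exactly at the point where the paper uses its one key trick. The paper does not extend a coloring of $G-u$: it colors the graph $G'$ obtained from $G-u$ by adding the subdivided paths $u_1x_1u_3$, $u_1x_2u_4$, $u_1x_3u_5$ with new $2$-vertices $x_i$. Since each $x_i$ must see an odd color, this forces $c(u_1)\notin\{c(u_3),c(u_4),c(u_5)\}$, so $c(u_1)$ appears exactly once in $N(u)$ and $u$ automatically has an odd color no matter which color it later receives. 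This eliminates your case (I) before it arises and is what makes the rest of the argument a finite pattern analysis via the Claim and Lemmas~\ref{32123}, \ref{42123}, \ref{1234}. Your substitute treatment of case (I) does not work. First, the antipodal pattern $(a,b,c,a,b,c)$ is \emph{not} a rotation or reflection of $(a,b,c,a,c,b)$: its three color pairs all sit at cyclic distance $3$, whereas the other type has distances $3,2,2$. In the antipodal pattern the two cycle-neighbors of every $u_i$ get distinct colors, so the hypothesis $c(u_2)=c(u_6)$ of Lemma~\ref{32123} fails for every choice of center, and no relabeling makes that lemma applicable; Lemmas~\ref{42123} and \ref{1234} supply no recoloring at all. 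Second, even for the pattern $(a,b,c,a,c,b)$, when the odd-color hypotheses of Lemma~\ref{32123} fail (some of $u_1,u_2,u_6$ is free, or $\overline{c_o}(u_2)=\overline{c_o}(u_6)$, or some $\overline{c_o}(u_i)\in\{a,b,c\}$), Lemmas~\ref{42123} and \ref{1234} do not ``rule out'' these subcases: their conclusions only cap how many colors the $\overline{c_o}$-values can occupy, which is irrelevant to the parity obstruction at $u$. In a $(2,2,2)$ neighborhood $u$ has no odd color regardless of the color assigned to it, so only a recoloring of some $u_i$ can help, and in those subcases you have no tool that produces one.

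A secondary shortfall: in your case (II) the reduction to Lemmas~\ref{42123} and \ref{1234} is only asserted. In the paper's setting, where $c(u_1)$ is already forced to be unique, this step is still a nine-case analysis, and in several of those cases the two counting lemmas are not enough: Case 1 needs the recoloring of Lemma~\ref{32123} to make $u_2,u_6$ free, and Cases 6 and 9 need further explicit recolorings of $u$, $u_1$ or $u_2$ with parity checks at the outer vertices $v_2,v_4,v_8,v_{10},v_{12}$. Without the added-path trick your case analysis is strictly larger than the paper's, so it certainly cannot be settled by ``rotating into the lemmas'' alone; as written, the proposal is incomplete both in case (I) (where it is wrong for the antipodal pattern) and in case (II) (where the essential work is left undone).
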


\begin{proof}

Suppose otherwise that $G$ has the configure in Figure 1. Let $G'$ be the graph obtained from $G-u$ by adding paths $u_1x_1u_3,u_1x_2u_4,u_1x_3u_5$, where $x_1,x_2,x_3$ are $2$-vertices. Since $G'$ has fewer $6^+$-vertices than $G$, $G'$ has an odd $9$-coloring $c'$. Let $c(z)=c'(z)$ for each vertex $z\in V(G)-u$. Since $c'$ is an odd coloring, each of $x_1,x_2$ and $x_3$ has an odd color. Then $c(u_1)\neq c(u_3),c(u_1)\neq c(u_4),c(u_1)\neq c(u_5)$. Then all the color possibilities of $u_1,u_2,\ldots,u_6$ are shown as the following cases by symmetry.

We first establish the following claim:

{\bf Claim} Let $|\{c(u_1),c(u_2),\ldots,c(u_6)\}|=k$. If one of the following statements hold, then $u$ admits an odd coloring in $G$. %has at least one color to get such $u$ is proper and has an odd color.

\begin{enumerate}

    \item there exist at least $k-2$ neighbors { $u_i$}, $u_i$ satisfies that $u_i$ are free to $u$ or  { $\overline{c_o}(u_i)$} $\in$ $\{c(u_1),c(u_2),\ldots,c(u_6)\}$;

    \item $|\{\overline{c_o}(u_1),\overline{c_o}(u_2),\overline{c_o}(u_3),
        \overline{c_o}(u_4),\overline{c_o}(u_5),\overline{c_o}(u_6)\}|< 9-k$.

\end{enumerate}

{\bf Case 1} $c(u_1)=1,c(u_2)=c(u_6)=2,c(u_3)=c(u_5)=3,c(u_4)=2$.

By Claim, $\{\overline{c_o}(u_1),\overline{c_o}(u_2),\overline{c_o}(u_3),\overline{c_o}(u_4),
\overline{c_o}(u_5),\overline{c_o}(u_6)\}=\{4,5,6,7,8,9\}$. By Lemma \ref{32123}, we can recolor $u_1$ with color not in $\{c(u_1),c(u_2),c(u_3),c(u_4)\}$ such that $u_2$ and $u_6$ are free to $u$. Then color $u$ with a color in $[9]\setminus\{\overline{c_o}(u_1), \overline{c_o}(u_3),\overline{c_o}(u_4), \overline{c_o}(u_5),c(u_1),c(u_2),c(u_3),c(u_4)\}$.

% $u$ has odd color $2$

Since $c(u_1)\neq c(u_4)$, $u$ always admits an odd coloring in $G$. Thus, the odd coloring $c'$ of $G'$ can return back to $G$, a contradiction.

{\bf Case 2} $c(u_1)=1,c(u_2)=c(u_6)=2,c(u_3)=c(u_5)=3,c(u_4)=4$. or $c(u_1)=1,c(u_2)=c(u_5)=3,c(u_6)=c(u_3)=2,c(u_4)=4$.

By claim, $\{5,6,7,8,9\} \setminus\{\overline{c_o}(u_1),\overline{c_o}(u_2),\overline{c_o}(u_3),\overline{c_o}(u_4),\overline{c_o}(u_5),\overline{c_o}(u_6)\}=\varnothing$.  Since $c(u_1)\neq c(u_2)\neq c(u_3)\neq c(u_4)$, 
$\{\overline{c_o}(u_2), \overline{c_o}(u_3)\}$ occupies at most one color in $\{5,\ldots,9\}$ by Lemma \ref{1234}. If $\overline{c_o}(u_3)\in \{5,\ldots,9\}$, then $\{\overline{c_o}(u_1),\overline{c_o}(u_3),\overline{c_o}(u_4),\overline{c_o}(u_5),\overline{c_o}(u_6)\}=\{5,6,7,8,9\}$. If $\overline{c_o}(u_2)\in \{5,\ldots,9\}$, then $\{\overline{c_o}(u_1),\overline{c_o}(u_2),\overline{c_o}(u_4),\overline{c_o}(u_5),\overline{c_o}(u_6)\}=\{5,6,7,8,9\}$. If $\overline{c_o}(u_2), \overline{c_o}(u_3)\notin \{5,\ldots,9\}$, then $u$ admits an odd coloring in $G$ by~Claim.
In each case, $\overline{c_o}(u_5)\neq\overline{c_o}(u_6)$ and each of $\overline{c_o}(u_5)$ and $\overline{c_o}(u_6)$ be in $\{5,6,7,8,9\}$, which contradicts Lemma \ref{1234} since $c(u_1)\neq c(u_6) \neq c(u_5) \neq c(u_4)$.

 {\bf Case 3} $c(u_1)=1,c(u_2)=c(u_6)=c(u_4)=2,c(u_3)=3,c(u_5)=4$.

By claim, $\{5,6,7,8,9\} \setminus\{\overline{c_o}(u_1),\overline{c_o}(u_2),\overline{c_o}(u_3),\overline{c_o}(u_4),\overline{c_o}(u_5),\overline{c_o}(u_6)\}=\varnothing$.  Since $c(u_6)=c(u_2)\neq c(u_1)\neq c(u_3)\neq c(u_5)$, 

$\overline{c_o}(u_i)$ for $i\in\{1,2,6\}$ occupies at most two different colors in $\{5,6,7,8,9\}$  by Lemma \ref{42123}.  If $\{\overline{c_o}(u_1),\overline{c_o}(u_2),\overline{c_o}(u_6)\}$ occupies at most one color in $\{5,6,7,8,9\}$, 
 then $\{5,6,7,8,9\}\setminus\{\overline{c_o}(u_3),\overline{c_o}(u_4),\overline{c_o}(u_5)\}\neq \emptyset$, 
 it contradicts Claim. If $\{\overline{c_o}(u_1),\overline{c_o}(u_2),\overline{c_o}(u_6)\}$ occupies two colors in $\{5, 6, 7,8,9\}$, say $5$ and $6$, then $\{\overline{c_o}(u_3),\overline{c_o}(u_4),\overline{c_o}(u_5)\}= \{7,8,9\}$. If $\{\overline{c_o}(u_2),\overline{c_o}(u_6)\}= \{5,6\}$, then $\{\overline{c_o}(u_6),\overline{c_o}(u_2),\overline{c_o}(u_3),\overline{c_o}(u_4),\overline{c_o}(u_5)\}=\{5,6,7,8,9\}$. If $\{\overline{c_o}(u_1),\overline{c_o}(u_6)\}= \{5,6\}$, then $\{\overline{c_o}(u_1),\overline{c_o}(u_6),\overline{c_o}(u_3),\overline{c_o}(u_4),\overline{c_o}(u_5)\}=\{5,6,7,8,9\}$. If $\{\overline{c_o}(u_1),\overline{c_o}(u_2)\}= \{5,6\}$, then $\{\overline{c_o}(u_1),\overline{c_o}(u_2),\overline{c_o}(u_3),\overline{c_o}(u_4),\overline{c_o}(u_5)\}=\{5,6,7,8,9\}$.  
In the first two cases,  $\overline{c_o}(u_4)\neq \overline{c_o}(u_5)\neq \overline{c_o}(u_6)$ and each of $\overline{c_o}(u_4),\overline{c_o}(u_5)$ and $\overline{c_o}(u_6)$ is in $\{5,6,7,8,9\}$, which contradicts Lemma  \ref{42123} since $c(u_6)=c(u_4)\neq c(u_5) \neq c(u_1) \neq c(u_3)$. In the last case, $\overline{c_o}(u_2)\neq \overline{c_o}(u_3)\neq \overline{c_o}(u_4)$ and each of $\overline{c_o}(u_2),\overline{c_o}(u_3)$ and $\overline{c_o}(u_4)$ is in $\{5,6,7,8,9\}$, which contradicts Lemma  \ref{42123} since $c(u_2)=c(u_4)\neq c(u_1) \neq c(u_3) \neq c(u_5)$.

{\bf Case 4} $c(u_1)=1,c(u_2)=c(u_4)=3,c(u_6)=c(u_3)=2,c(u_5)=4$.

By claim, $\{5,6,7,8,9\} \setminus\{\overline{c_o}(u_1),\overline{c_o}(u_2),\overline{c_o}(u_3),\overline{c_o}(u_4),\overline{c_o}(u_5),\overline{c_o}(u_6)\}=\varnothing$.  Since $c(u_1)\neq c(u_6)\neq c(u_5)\neq c(u_4)$, $\{\overline{c_o}(u_5), \overline{c_o}(u_6)\}$ occupies at most one color in $\{5,\ldots,9\}$ by Lemma~\ref{42123}.
If $\overline{c_o}(u_6)\in \{5,\ldots,9\}$, then $\{\overline{c_o}(u_1),\overline{c_o}(u_2),\overline{c_o}(u_3),\overline{c_o}(u_4),\overline{c_o}(u_6)\}=\{5,6,7,8,9\}$. If $\overline{c_o}(u_5)\in \{5,\ldots,9\}$, then $\{\overline{c_o}(u_1),\overline{c_o}(u_2),\overline{c_o}(u_3),\overline{c_o}(u_4),\overline{c_o}(u_5)\}=\{5,6,7,8,9\}$. If $\overline{c_o}(u_5), \overline{c_o}(u_6)\notin \{5,\ldots,9\}$, then $u$ admits an odd coloring in $G$ by~Claim.

% $\{5,6,7,8,9\} \setminus\{\overline{c_o}(u_1),\overline{c_o}(u_2),\overline{c_o}(u_3),\overline{c_o}(u_4),\overline{c_o}(u_5)\}=\varnothing$  or $\{5,6,7,8,9\} \setminus\{\overline{c_o}(u_1),\overline{c_o}(u_2),\overline{c_o}(u_3),\overline{c_o}(u_4),\overline{c_o}(u_6)\}=\varnothing$ by Lemma \ref{1234}.

In each case,  $\overline{c_o}(u_2)\neq \overline{c_o}(u_3)\neq \overline{c_o}(u_4)$ and each of $\overline{c_o}(u_2),\overline{c_o}(u_3)$ and $\overline{c_o}(u_4)$ be in $\{5,6,7,8,9\}$, which contradicts Lemma  \ref{42123} since $c(u_2)=c(u_4)\neq c(u_1) \neq c(u_3) \neq c(u_5)$.

{\bf Case 5} $c(u_1)=1,c(u_2)=c(u_6)=2,c(u_3)=3,c(u_5)=4,c(u_4)=5$.

By claim, $\{6,7,8,9\} \setminus\{\overline{c_o}(u_1),\overline{c_o}(u_2),\overline{c_o}(u_3),\overline{c_o}(u_4),\overline{c_o}(u_5),\overline{c_o}(u_6)\}=\varnothing$. By Lemma \ref{1234}, $\{\overline{c_o}(u_2), \overline{c_o}(u_3)\}$ occupies at most one color in $\{6,7,8,9\}$ due to $c(u_1)\neq c(u_2)\neq c(u_3)\neq c(u_4)$; $\{\overline{c_o}(u_5), \overline{c_o}(u_6)\}$ occupies at most one color in $\{6,7,8,9\}$ due to $c(u_1)\neq c(u_6) \neq c(u_5) \neq c(u_4)$. If $\overline{c_o}(u_2), \overline{c_o}(u_5)\in\{6,7,8,9\}$, then $ \{\overline{c_o}(u_1),\overline{c_o}(u_2),\overline{c_o}(u_4),\overline{c_o}(u_5)\}=\{6,7,8,9\}$; if $\overline{c_o}(u_2), \overline{c_o}(u_6)\notin\{6,7,8,9\}$, then $\{\overline{c_o}(u_1),\overline{c_o}(u_2),\overline{c_o}(u_4),\overline{c_o}(u_6)\}=\{6,7,8,9\}$. They contradict Lemma \ref{1234} since $c(u_3)\neq c(u_4)\neq c(u_5)\neq c(u_6)$ and Lemma \ref{42123}   since $c(u_6)=c(u_2)\neq c(u_1)\neq c(u_3)\neq c(u_5)$. If $\overline{c_o}(u_3),\overline{c_o}(u_5)\in\{6,7,8,9\}$, then $ \{\overline{c_o}(u_1),\overline{c_o}(u_3),\overline{c_o}(u_4),\overline{c_o}(u_5)\}=\{6,7,8,9\}$; if $\overline{c_o}(u_3),\overline{c_o}(u_6)\in\{6,7,8,9\}$, then $\{\overline{c_o}(u_1),\overline{c_o}(u_3),\overline{c_o}(u_4),\overline{c_o}(u_6)\}=\{6,7,8,9\}$. They contradict Lemma~\ref{1234} since $c(u_2)\neq c(u_3)\neq c(u_4)\neq c(u_5)$. If $\{\overline{c_o}(u_2),\overline{c_o}(u_3), \overline{c_o}(u_5),\overline{c_o}(u_6)\}$ occupies at most one color in $\{6,7,8,9\}$, then $u$ admits an odd coloring by Claim.

{\bf Case 6} $c(u_1)=1,c(u_2)=3,c(u_6)=(u_3)=2,c(u_5)=4,c(u_4)=5$.

In this case,   $c(u_1)\neq c(u_2)\neq c(u_3)\neq c(u_4)$ and $c(u_1)\neq c(u_6) \neq c(u_5) \neq c(u_4)$. By same argument of Case 5, $\{6,7,8,9\} \setminus\{\overline{c_o}(u_1),\overline{c_o}(u_2),\overline{c_o}(u_4),\overline{c_o}(u_5)\}\\=\varnothing$ or $\{6,7,8,9\} \setminus\{\overline{c_o}(u_1),\overline{c_o}(u_2),\overline{c_o}(u_4),\overline{c_o}(u_6)\}=\varnothing$ or $\{6,7,8,9\} \setminus\{\overline{c_o}(u_1),\\\overline{c_o}(u_3),\overline{c_o}(u_4),\overline{c_o}(u_5)\}=\varnothing$ or $\{6,7,8,9\} \setminus\{\overline{c_o}(u_1),\overline{c_o}(u_3),\overline{c_o}(u_4),\overline{c_o}(u_6)\}=\varnothing$. In the first case, we assume that $\overline{c_o}(u_1)=6,\overline{c_o}(u_2)=7,\overline{c_o}(u_4)=8,\overline{c_o}(u_5)=9$.  Since $\overline{c_o}(u_1)=6$ and $c(u_2)\neq c(u_6)$, $\{6,2,3\}=\{c(v_{12}),c(v_1),c(v_2)\}$. Then $c(v_2)=6$ or $2$. If $c(v_2)=6$, then $\overline{c_o}(u_2)=6$, a contradiction. Thus, $c(v_2)=2$. By symmetry, $c(v_8)=2$.  We  color $u$ with $3$, recolor $u_2\in \lbrack9\rbrack\setminus\{1,2,3,6,7,\overline{c_o}(v_2),\overline{c_o}(v_3),\overline{c_o}(v_4)\}$. Since $\overline{c_o}(u_2)=7$, $c(v_4)=1$ or $7$.  Since $\overline{c_o}(u_4)=8$, $c(v_4)=4$ or $8$. Then $u_3$ has an odd coloring. Since $\overline{c_o}(u_1)=6$, $c(v_{12})=3$ or $6$. Since $\overline{c_o}(u_4)=9$, $c(v_{10})=5$ or $9$. Then $u_6$ has an odd coloring. Then $\overline{c_o}(u_1)=6,\overline{c_o}(u_2)=7,\overline{c_o}(u_4)=8,\overline{c_o}(u_5)=9$. Thus, $G$ has a $9$-odd coloring, a contradiction. In the second case,  $\overline{c_o}(u_1)\neq \overline{c_o}(u_6)$ and each of $\overline{c_o}(u_1)$ and $\overline{c_o}(u_6)$ be in $\{6,7,8,9\}$, which contradicts Lemma  \ref{1234} since $c(u_2)\neq c(u_1) \neq c(u_6) \neq c(u_5)$.  In the last two cases,  $\overline{c_o}(u_3)\neq \overline{c_o}(u_4)$ and each of $\overline{c_o}(u_3)$ and $\overline{c_o}(u_4)$ be in $\{6,7,8,9\}$, which contradicts Lemma  \ref{1234} since $c(u_2)\neq c(u_3) \neq c(u_4) \neq c(u_5)$.

{\bf Case 7} $c(u_1)=1,c(u_2)=3,c(u_6)=2,c(u_3)=4,c(u_5)=4,c(u_4)=5$.

In this case,   $c(u_1)\neq c(u_2)\neq c(u_3)\neq c(u_4)$ and $c(u_1)\neq c(u_6) \neq c(u_5) \neq c(u_4)$. By same argument of Case 5, $\{6,7,8,9\} \setminus\{\overline{c_o}(u_1),\overline{c_o}(u_2),\overline{c_o}(u_4),\overline{c_o}(u_5)\}\\=\varnothing$ or $\{6,7,8,9\} \setminus\{\overline{c_o}(u_1),\overline{c_o}(u_2),\overline{c_o}(u_4),\overline{c_o}(u_6)\}=\varnothing$ or $\{6,7,8,9\} \setminus\{\overline{c_o}(u_1),\\\overline{c_o}(u_3),\overline{c_o}(u_4),\overline{c_o}(u_5)\}=\varnothing$ or $\{6,7,8,9\} \setminus\{\overline{c_o}(u_1),\overline{c_o}(u_3),\overline{c_o}(u_4),\overline{c_o}(u_6)\}=\varnothing$. In first two cases,  $\overline{c_o}(u_1)\neq \overline{c_o}(u_2)$ and each of $\overline{c_o}(u_1)$ and $\overline{c_o}(u_2)$ be in $\{6,7,8,9\}$, which contradicts Lemma  \ref{1234} since $c(u_3)\neq c(u_2) \neq c(u_1) \neq c(u_6)$. In the third case, $\overline{c_o}(u_3)\neq \overline{c_o}(u_4) \neq \overline{c_o}(u_5)$ and each of $\overline{c_o}(u_3), \overline{c_o}(u_4)$ and $\overline{c_o}(u_5)$ be in $\{6,7,8,9\}$, which contradicts Lemma  \ref{42123} since $c(u_5)=c(u_3)\neq c(u_2) \neq c(u_4) \neq c(u_6)$. In last case, $\overline{c_o}(u_1)\neq \overline{c_o}(u_6)$ and each of $\overline{c_o}(u_1)$ and $\overline{c_o}(u_6)$ be in $\{6,7,8,9\}$, which contradicts Lemma  \ref{1234} since $c(u_2)\neq c(u_1) \neq c(u_6) \neq c(u_5)$.

{\bf Case 8} $c(u_1)=1,c(u_2)=3,c(u_6)=c(u_4)=2,c(u_3)=4,c(u_5)=5$.

By claim, $\{6,7,8,9\} \setminus\{\overline{c_o}(u_1),\overline{c_o}(u_2),\overline{c_o}(u_3),\overline{c_o}(u_4),\overline{c_o}(u_5),\overline{c_o}(u_6)\}=\varnothing$.  By Lemma \ref{1234},  $\{\overline{c_o}(u_3),\overline{c_o}(u_4)\}$ occupies at most one color in $\{6,7,8,9\}$ due to $c(u_2)\neq c(u_3)\neq c(u_4)\neq c(u_5)$; $\{\overline{c_o}(u_1) \overline{c_o}(u_6)\}$ occupies at most one color in $\{6,7,8,9\}$ due to $c(u_1)\neq c(u_2) \neq c(u_5) \neq c(u_6)$. Thus, $\{6,7,8,9\} \setminus\{\overline{c_o}(u_1),\overline{c_o}(u_2),\overline{c_o}(u_3),\overline{c_o}(u_5)\}=\varnothing$ or $\{6,7,8,9\} \setminus\{\overline{c_o}(u_6),\overline{c_o}(u_2),\overline{c_o}(u_3),\overline{c_o}(u_5)\}\\=\varnothing$ or  $\{6,7,8,9\} \setminus\{\overline{c_o}(u_1),\overline{c_o}(u_2),\overline{c_o}(u_4),\overline{c_o}(u_5)\}=\varnothing$ or $\{6,7,8,9\} \setminus\{\overline{c_o}(u_6),\\\overline{c_o}(u_2),\overline{c_o}(u_4),\overline{c_o}(u_5)\}=\varnothing$. In the former two cases, they contradict Lemma \ref{1234} since $c(u_1)\neq c(u_2)\neq c(u_3)\neq c(u_4)$. In the latter two cases, they contradict Lemma \ref{1234} due to $c(u_6)\neq c(u_1)\neq c(u_2)\neq c(u_3)$ and Lemma~\ref{42123} due to $c(u_6) = c(u_4)\neq c(u_3)\neq c(u_5)\neq c(u_1)$.

%%%%%%begin%%%%%

\iffalse

In this case, each of $\{6,7,8,9\}$ must appears $c_o(u_1),c_o(u_2),c_o(u_3),c_o(u_4),c_o(u_5),c_o(u_6)$. Otherwise, $u$ has at least one color to get, a contradiction. Thus,  $\{c_o(u_1),c_o(u_2),c_o(u_3),c_o(u_4)\}=\{6,7,8,9\}$ or $\{c_o(u_1),c_o(u_2),c_o(u_3),c_o(u_5)\}=\{6,7,8,9\}$ or $\{c_o(u_1),c_o(u_2),c_o(u_3),c_o(u_6)\}=\{6,7,8,9\}$ or $\{c_o(u_1),c_o(u_2),c_o(u_4),c_o(u_5)\}=\{6,7,8,9\}$ or

$\{c_o(u_1),c_o(u_2),c_o(u_4),c_o(u_6)\}=\{6,7,8,9\}$ or

$\{c_o(u_1),c_o(u_2),c_o(u_5),c_o(u_6)\}=\{6,7,8,9\}$ or $\{c_o(u_1),c_o(u_3),c_o(u_4),c_o(u_5)\}=\{6,7,8,9\}$ or

$\{c_o(u_1),c_o(u_3),c_o(u_5),c_o(u_6)\}=\{6,7,8,9\}$ or

$\{c_o(u_1),c_o(u_4),c_o(u_5),c_o(u_6)\}=\{6,7,8,9\}$.

In the {\textcolor{red}{first six}} cases, the odd coloring of $u_1$ and $u_2$ contradict Lemma \ref{1234}.    In the seventh case, the odd coloring of $u_3$ and $u_4$ contradict Lemma \ref{1234}. In the last two cases, the odd coloring of $u_1$ and $u_6$ contradict Lemma \ref{1234}.

\fi

%%%%%%%end%%%%

{\bf Case.9} $c(u_1)=1,c(u_2)=2,c(u_6)=6,c(u_3)=3,c(u_5)=5,c(u_4)=4$.

By claim, $\{7,8,9\} \setminus\{\overline{c_o}(u_1),\overline{c_o}(u_2),\overline{c_o}(u_3),\overline{c_o}(u_4),\overline{c_o}(u_5),\overline{c_o}(u_6)\}=\varnothing$.

% In this case, each of $\{7,8,9\}$ must appears $c_o(u_1),c_o(u_2),c_o(u_3),c_o(u_4),c_o(u_5),c_o(u_6)$.

% Otherwise, $u$ admits an odd coloring, a contradiction.

% If the odd coloring of $u_i$ and $u_{i+1}$ is two of $\{7,8,9\}$, then which contradicts Lemma \ref{1234} where $1\leq i \leq 6$ and $i+1=1$ if $i=6$.

By Lemma \ref{1234}, $\{u_i, u_{i+1}\}$ occupies at most one color in  $\{7,8,9\}$, where $1\leq i \leq 6$ and $i+1=1$ if $i=6$.
Thus, $\{\overline{c_o}(u_1), \overline{c_o}(u_3), \overline{c_o}(u_5)\}=\{7,8,9\}$ by symmetry.
We assume that,  $\overline{c_o}(u_1)=7,\overline{c_o}(u_3)=8,\overline{c_o}(u_5)=9$ and $c(u_1)\neq c(u_2)\neq \ldots \neq c(u_6)$. Then $c(v_{12})=2$ or $7$, $c(v_2)=6$ or $7$, $c(v_4)=4$ or $8$, $c(v_6)=2$ or $8$, $c(v_8)=9$ or $6$, $c(v_{10})=9$ or $4$. Then color $u$ with $1$. Recolor $u_1\in \lbrack9\rbrack\setminus\{1,2,6,7,c_o(v_1),c_o(v_2),c_o(v_{12})\}$. In each case, $u_2,u_4$ and $u_6$ has at least one odd coloring,  $\overline{c_o}(u_1)=1,\overline{c_o}(u_3)=8,\overline{c_o}(u_5)=9$, and $u$ has the odd coloring, a contradiction.

\end{proof}

Now we are ready to  complete the proof of Theorem~\ref{th1}.   Let each   $v\in V(G)$ have an initial charge of $\mu(v)=d(v)-6$, each $f\in\cup F(G)$ have an initial charge of $\mu(f)=2d(f)-6$.   By Euler's Formula, $|V(G)|+|F(G)|-|E(G)|\geq 0$. Then $\sum_{v\in V }\mu(v)+\sum_{f\in F }\mu(f)=0$.

Let $\mu^*(x)$ denote the final charge of $x\in V(G)\cup F(G)$ after the discharging procedure. To lead to a contradiction, we shall prove that $\sum_{x\in V(G)\cup F(G)}  \mu^*(x)\\ > 0$.  Since the total sum of charges is unchanged in the discharging procedure, this contradiction proves Theorem~\ref{th1}.

We use the following discharging rules:

\begin{enumerate}[(R1)]

 \item  Every $ 4^{+} $-face  sends  $1$ to each incident $5$ -vertex.

    \item  Every $8^{+}$-vertex $u$ sends $\frac{3}{8}$ to each adjacent $5$ -vertex $v$ if both faces incident with the edge $uv$ are $3$-faces. %as follows: if one of these triangles has another 5 -vertex (besides \( v) \), then the sent charge equals \( \frac{1}{3} \); otherwise, the sent charge equals \( \frac{1}{2} \).

    %\item  Every $4^{+}$-face $ f$ sends through every incident   $uw$ with $d(u)=d(w)=6$ charge $\frac{1}{2}$ to a $5$-vertex $v$ if $[u v w]$ is a triangular face. %and \( f \) is not a 4 -face incident with two 5 -vertices.

\end{enumerate}

By Lemma \ref{minimum degree}, $\delta (G)\geq 5$. Then we check that the final charge of each $5^+$-vertex  and each face is nonnegative.

\begin{enumerate}[1.]

    \item Let $f$ be a $3$-face. By Rules. $f$ is not involved in any discharging procedure. Thus, $\mu^*(f)=2d(f)-6=0$.

    \item Let $f$ be a $4^+$-face. By Lemma \ref{oddvet-non-adja}, $5$-vertex is not adjacent to $5$-vertex. Then $f$ is incident with at most $\frac{d(f)}{2}$ $5$-vertices. By (R1),  $f$ sends $1$ to each incident $5$-vertex. Thus, $\mu^*(f)=2d(f)-6-\frac{d(f)}{2}\geq0$. Note that $\mu^*(f)=0$ if and only if $d(f)=4$ and $f$ is incident with two $5$-vertices.

 \item Let $v$ be a $5$-vertex. If $v$ is incident with at least two $4^+$-faces, then each of $4$-faces sends $1$ to $v$ by (R1). Thus, $\mu^*(v)=d(v)-6+2>0$. If $v$ is incident with one $4$-face, then $v$ is incident with four $3$-faces. Let $v_1,v_2,\ldots,v_5$ be the neighbors of $v$, the face incident with $v_1vv_2$ be a $4^+$-face. By Lemma \ref{oddvet-non-adja}, each of $v_1,\ldots,v_5$ is a $6^+$-vertex. By Lemma \ref{5-vetx} and one of $v_3$ and $v_4$ is a $8^+$-vertex. By (R2), this $8^+$-vertex sends $\frac{3}{8}$ to $v$. Thus, $\mu^*(v)=d(v)-6+1+\frac{3}{8}>0$. Thus, each face incident with $v$ is a $3$-face. By Lemma  \ref{5-vetx}, at least one of $v_{i}$ and $v_{i+1}$ is a $8^+$-vertex where $i\in\{1,2,3,4,5\}$ and $i+1=1$ if $i=5$. Thus, at least three of $v_1,v_2,\ldots,v_5$ are $8^+$-vertices. By (R2), $\mu^*(v)=d(v)-6+\frac{3}{8}\times3>0$.

    \item Let $v$ be a $6$- or $7$-vertex.  By Rules. $v$ is not involved in any discharging procedure. Thus, $\mu^*(v)=d(v)-6=0$ if $f(f)=6$, $\mu^*(v)=d(v)-6>0$ if $f(f)=7$.

    \item Let $v$ be a $8^+$-vertex. By (R2), $v$ sends $\frac{3}{8}$ to adjacent $5$-vertex $v$ if $uv$ is incident with two $3$-faces. Let $v_2$ be the $5$-neighbor of $v$ and $vv_2$ is incident with two $5$-faces $[vv_2v_1]$ and $[vv_2v_3]$. By Lemma \ref{oddvet-non-adja}, $5$-vertex is not adjacent to $5$-vertex. Then each of $v_1$ and $v_3$ are $6^+$-vertices. Thus, $v$ sends charge to at most $\frac{d(v)}{2}$ $5$-neighbors. Thus,  $\mu^*(v)=d(v)-6-\frac{3}{8}\times\frac{d(v)}{2}>0$.

\end{enumerate}

From our hypothesis and the above discharging procedure, we know that the following configurations admit positive final charge and thus can not be contained in $G$. That's, if $G$ has a  $7^+$-vertex or a  $5$-vertex, then $\sum_{x\in V(G)\cup F(G)}  \mu^*(x) > 0$; if $G$ has a   $5^+$-face or a $4$-face incident with at most one $5$-vertex, then $\sum_{x\in V(G)\cup F(G)}  \mu^*(x) > 0$; if $G$ has a $4$-face incident with two $5$-vertices, then these two $5$-vertices has final charge more than $0$, thus $\sum_{x\in V(G)\cup F(G)}  \mu^*(x) > 0$. Thus, $G$ only has $6$-vertices and $3$-faces. By Lemma \ref{6-v}, $G$ has no such configurations, a contradiction.

\small

\end{document}